\documentclass[amscd,amssymb,verbatim,12pt]{amsart}
\usepackage{epsfig}
\usepackage{graphicx}

\newif\ifAMS
\IfFileExists{amssymb.sty}
  {\AMStrue\usepackage{amssymb}}
  {\usepackage{latexsym}}
  \usepackage{enumerate}
\theoremstyle{plain}

\newtheorem*{Menger}{Menger's Theorem}
\newtheorem*{Cactus}{Theorem \ref{cactus}}
\newtheorem{Thm}{Theorem}[section]
\newtheorem{Cor}[Thm]{Corollary}
\newtheorem{Lem}[Thm]{Lemma}

\theoremstyle{definition}
\newtheorem{Def}{Definition}[section]

\theoremstyle{remark}
\newtheorem{Rem}{Remark}

\newtheorem{Ex}[Thm]{Example}

%Command
\newcommand{\interior}{^{ \kern-5pt ^\circ}}
\newcommand {\bd}{\partial}

\newcommand {\cP}{{\mathcal  P}}

\begin{document}
\title
{A cactus theorem for end cuts}

\author
{Anastasia Evangelidou and Panos Papasoglu }

\subjclass[2010]{05C40, 05C25, 20E08}

\email [Anastasia Evangelidou]{aneva@spidernet.com.cy  } \email [Panos
Papasoglu]{papazoglou@maths.ox.ac.uk}

\address
[Anastasia Evangelidou] { Mathematics Department, University of
Athens, Athens 157 84, Greece}
\address
[Panos Papasoglu] 
{Mathematical Institute, University of Oxford, 24-29 St Giles', Oxford, OX1 3LB, U.K. }

\begin{abstract} Dinits-Karzanov-Lomonosov showed that it is possible to encode all
minimal edge cuts of a graph by a tree-like structure called a
cactus. We show here that minimal edge cuts separating ends of the graph
rather than vertices can be `encoded' also by a cactus.
We apply our methods to finite graphs as well and we show that several types of cuts 
can be encoded by cacti.

\end{abstract}
\maketitle
\section{Introduction and Preliminaries}
Vertex and edge cuts of graphs have been studied extensively in several
different contexts: graph theory, geometric group theory, topology
and networks. They have played an important role in applications,
notably in clustering algorithms, combinatorial optimization and
network design.

E.A. Dinits,  A.V. Karzanov, M.V. Lomonosov \cite{DKL} (see also
\cite{KT}, sections 7.4,7.5) gave an elegant way to encode all
minimal edge cuts of a graph by a cactus, a tree-like structure.
For a recent short proof of their theorem see \cite{FF}. This
structure theorem has found many important applications (\cite
{NV}, \cite {NGM}). The crucial observation in \cite{DKL} is that minimal edge cuts
which `cross' have a circular structure.

Tutte has studied vertex cuts and has shown that minimal vertex cuts of cardinality 2
can be encoded by a tree like structure (\cite{T1} Ch. IV, \cite{T2} ch. 11, \cite {T3}). In fact one can see
Tutte's theorem as a cactus theorem for vertex cuts, but his theorem applies only to cuts of cardinalitty 2.
In \cite{DSS} Tutte's theorem was extended to infinite, locally finite graphs.

There is a similar theory of cuts of connected metric spaces (\cite{W}, \cite{B})
dealing with cut points and cut pairs. In particular in the case of cut pairs Bowditch shows
that crossing cut pairs have a circular structure.

Stallings \cite {St} (in the locally finite case) and Dunwoody
\cite{D} (in general) have shown that if $\Gamma $ is a graph with
more than one end then there is a set of minimal end cuts of
$\Gamma $ which is invariant under $Aut(\Gamma)$ and which can be
encoded by a tree. The main motivation of Stallings and Dunwoody was to
classify groups with many ends.

In this paper we show that one can encode the set of \emph{all} minimal end cuts of
a graph by a cactus. We note that Stallings and Dunwoody proceeded by finding a subset of minimal
end cuts which can be encoded by a tree (and is invariant under the automorphism group) while our work reveals
that the set of all minimal end cuts has the finer structure of a cactus.
In particular we show that crossing end cuts have a circular structure.
It follows from our result too that groups with many ends split over finite groups.

Let $\Gamma=(V,E)$ be a connected graph. 
A subset $K\subset E$ is
called an {\it edge cut} if $\Gamma -K=(V,E-K)$ has at least two
connected components. A subset $K\subset V$ is a \textit{vertex cut} of
$\Gamma $ if $\Gamma -K$ is not connected. If $A,B\subset \Gamma $ we say that
$K$ \textit{separates} $A$ from $B$ if any path joining a vertex of $A$ to a vertex of $B$ intersects $K$.

A {\it ray} of $\Gamma $ is an infinite sequence of distinct
consecutive vertices $v_0,v_1,v_2,...$ of $\Gamma $. We say that
two rays $r_1,r_2$ are equivalent if for any finite edge cut $K$
all vertices of $r_1\cup r_2$ except finitely many are contained
in the same connected component of $\Gamma -K$. The \textit{ends} of $\Gamma
$ are the equivalence classes of rays. If $A\subset \Gamma $ and $e$ is an end we say that
$A$ \textit{contains} $e$ is almost all vertices of some (all) ray $r$ representing $e$ are contained in $A$.
Let $K$ be a finite edge
cut of $\Gamma $. We say that $K$ is an \textit{end cut} of
$\Gamma $ if there are at least two connected components of
$\Gamma -K$ which contain rays. We say that an end cut is a
\textit{minimal cut} of $\Gamma $ if its cardinality is minimal
among all end cuts of $\Gamma $. We remark that if $K $ is a
minimal cut then $\Gamma-K$ has exactly two components. We say
that a minimal cut $K$ \textit{separates} two ends $e_1,e_2$ if
there are two rays $r_1,r_2$ representing respectively $e_1,e_2$
such that $r_1,r_2$ are contained in distinct connected components
of $\Gamma -K$. We say that two minimal cuts $K,L$ are \textit{equivalent}
if any two ends $e_1,e_2$ are separated by $K$ if and only if they are separated
by $L$. We denote the equivalence class of $K$ by $[K]$.
If $M$ is a subset of a graph we denote by $V(M)$
the set of vertices of $M$. We say that a set of consecutive edges
of a graph $e_1=[a_1,a_2],...,e_k=[a_k,a_{k+1}]$ is a
\textit{cycle} if $a_1=a_{k+1}$ and for $i\neq j$, $a_i\neq a_j$
if $\{i,j\}\neq \{1,k+1\}$.
A graph $\mathcal{C}=(V,E)$ is called a \textit{cactus} if any two cycles in $\mathcal{C}$
have at most one vertex in common. We say that a vertex $v$ of $\mathcal{C}$ is an \textit{end-vertex}
if $\mathcal{C}-v$ is connected. We remark that the degree of an end vertex is 1 or 2. We state now the main result of this paper:

\begin{Cactus} Let $\Gamma $ be a graph. Then there is a cactus $\mathcal{C}$, an onto map $f$ from the ends
of $\Gamma $ to the union of the ends of $\mathcal{C}$ with the end vertices of $\mathcal{C}$ and a 1-1 and onto map $g$ from equivalence classes
of minimal cuts of $\Gamma $ to the minimal edge cuts of $\mathcal{C}$ so that two ends $e_1,e_2$ of $\Gamma $ are separated by a minimal
cut $K$ if and only if $f(e_1),f(e_2)$ are separated by $g([K])$. Moreover any automorphism of $\Gamma $ induces an automorphism of $\mathcal{C}$.
\end{Cactus}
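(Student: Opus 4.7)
My approach adapts the Dinits--Karzanov--Lomonosov cactus construction, replacing ``vertices separated'' by ``ends separated''. The key new difficulty is that the two sides of a minimal cut must contain \emph{ends}, a condition more delicate than mere non-emptiness of vertex sets. The crucial first step is an uncrossing lemma: call two minimal end cuts $K, L$ \emph{crossing} if the four quadrants obtained by intersecting the components of $\Gamma - K$ with those of $\Gamma - L$ all contain ends. For such $K, L$, I would form the two cuts $K', L'$ separating each pair of opposite quadrants from its complement. Standard edge-cut submodularity gives $|K|+|L| \geq |K'|+|L'|$, and the crossing hypothesis ensures $K', L'$ are themselves end cuts; minimality of $|K|=|L|$ then forces equality, so $K', L'$ are minimal end cuts of the same cardinality. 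This uncrossing lemma is the technical heart of the argument.

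Using the uncrossing lemma I would analyze each equivalence class $[K]$ of minimal end cuts. If no two cuts in $[K]$ cross, then all cuts in the class induce the same bipartition of the ends and $[K]$ will contribute a single bridge in the cactus. If $[K]$ contains a crossing pair, iterated uncrossing produces a cyclically ordered collection of \emph{chambers} of ends, and the minimal cuts in $[K]$ are in bijection with the partitions of this cyclic collection into two non-empty arcs. This circular structure is precisely what becomes a cycle of the cactus; distinct crossing equivalence classes share at most one chamber, again by an uncrossing argument, which is the cactus property.

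With this structure in hand I construct $\mathcal{C}$ directly. Declare two ends of $\Gamma$ equivalent if no minimal end cut separates them, and take the equivalence classes (chambers) as the vertex set of $\mathcal{C}$. Each non-crossing equivalence class contributes a single edge between the chambers it bounds; each crossing class contributes a cycle through its cyclically ordered chambers. The map $g$ is the tautological assignment of $[K]$ to its edge or to an edge of its cycle. To define $f$, send an end $e$ of $\Gamma$ to the vertex of $\mathcal{C}$ corresponding to its chamber if only finitely many minimal cuts separate $e$ from other ends, and to the corresponding end of $\mathcal{C}$ otherwise, so that ends of $\Gamma$ lying ``at infinity'' in the cut structure become ends of $\mathcal{C}$, while those trapped in a stable chamber map to an end-vertex. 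Verification of the separation equivalence and surjectivity of $f$ reduces to the chamber analysis, and automorphism equivariance is immediate since every step of the construction is canonical.

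The main obstacle is the uncrossing lemma in the end setting. For vertex cuts one merely needs quadrants to be non-empty, but for end cuts one must produce an entire ray representative in each quadrant and verify that $K', L'$ genuinely separate ends rather than just vertices. One must also handle rays that meet the cut infinitely often or weave between quadrants, and ensure that the chamber equivalence on ends is independent of the choice of ray representatives. Once uncrossing is established in this stronger form, the remainder of the construction proceeds in close parallel with the classical DKL argument.
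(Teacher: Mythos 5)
Your outline has the right ingredients in spirit (uncrossing, a cyclic arrangement of ``chambers,'' a direct cactus construction), but it contains a conceptual error that breaks the central step. With the equivalence used in the statement — $K\sim L$ when $K$ and $L$ separate exactly the same pairs of ends — two cuts in the same class $[K]$ induce the \emph{same} bipartition $\mathcal{E}=K^{(1)}\cup K^{(2)}$ of ends, so $K_1^{(1)}\cap K_2^{(2)}=\emptyset$ and they can never cross. Your dichotomy ``if no two cuts in $[K]$ cross \dots\ if $[K]$ contains a crossing pair \dots'' is therefore vacuous, and the assertion that ``the minimal cuts in $[K]$ are in bijection with the partitions of the cyclic collection into two arcs'' cannot hold: the different arc-partitions of a cycle yield \emph{different} bipartitions of ends and hence inequivalent cuts. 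The circular structure lives at the level of \emph{inequivalent} crossing classes; a cycle of length $n$ in the cactus accounts for $\binom{n}{2}$ distinct equivalence classes. The object you need is not an equivalence class but a \emph{maximal cyclic set}: a maximal cyclic arrangement of beads and cut-halves containing all the cuts that cross one another. One then has to show such sets exist, are finite, and are unique (the paper's Lemmas \ref{union}, \ref{max}, \ref{hyp-in}, \ref{unique}), and this uniqueness is exactly the cactus property you wave at with ``distinct crossing equivalence classes share at most one chamber''; it does require real work, using Menger paths to pin down the possible cut positions.

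The second gap is that the vertex set you propose — equivalence classes of ends under ``not separated by any minimal cut'' — is too small. Take $\Gamma$ to be three rays emanating from a single vertex. Every end is separated from every other, so there are three singleton chambers and three pairwise non-crossing classes of minimal cuts. The correct cactus is a tripod (a central degree-3 hub with three pendant edges, doubled); your construction has no vertex to serve as the hub, since it corresponds to no end. This is precisely why the paper routes through pretrees and must adjoin ``star'' vertices (Section 2) — families of pairwise-adjacent pretree elements need an auxiliary common neighbour. Your chamber-only vertex set misses these, and without them the separating-edge incidences cannot even be assembled into a graph. To salvage a direct construction you would have to add a vertex for every maximal pairwise-adjacent family of non-crossing classes. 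The uncrossing lemma itself is on the right track — your submodularity argument is sound and parallels the paper's Lemma \ref{basic} — but you will also need the stronger conclusions of that lemma (the two cuts are disjoint in the barycentric subdivision, each is split evenly between the two sides of the other, and all four quadrants are connected and carry ends) to make the iteration terminate and the maximal cyclic sets well defined.
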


It turns out that one can show similar results for the structure of `small' edge cuts of a finite graph if the finite
graph contains `big' subgraphs that can not be cut by `small' cuts. We discuss this briefly in section \ref{gen}.

%\subsection {Outline of the proofs}
Martin Dunwoody brought to our attention his work with Kr\"{o}n \cite{DK} which contains some arguments similar to the ones used here.
He also told us about Tutte's work and this led us to consider edge cuts of finite graphs as well.
We would like to thank Aggelos Georgakopoulos
for pointing out a mistake in an earlier version of this paper.

\section{Pretrees}

We will use the notion of pretrees (\cite {B},\cite{B2}) to show that the
set of minimal cuts can be represented by a cactus.

Informally a pretree can be thought of as a subset of a tree.
Given a pretree one constructs a tree by `joining the dots' of the
pretree. In a subset of a tree there is a natural ternary
relation, if $a,b,c$ are 3 points at most one is between the 2
others. We use this betweeness relation to give a formal
definition of pretrees.

\begin{Def}
Let $\cP$ be a set and let $R\subset \cP\times \cP \times \cP$. We
say then that $R$ is a \emph{betweeness} relation. If $(x,y,z)\in
R$ then we write $xyz$ and we say that $y$ is between $x,z$. $\cP$
equipped with this betweeness relation is called a \emph{pretree}
if the following hold:

1. there is no $y$ such that $xyx$ for any $x\in \cP$.

2. $xzy \Leftrightarrow yzx$

3. For all $x,y,z$ if $y$ is between $x,z$ then $z$ is not between
$x,y$.

4. If $xzy$ and $z\ne w$ then either $xzw$ or $yzw$.

\end{Def}

\begin{Ex} The obvious example of a pretree is the vertex set of a tree. Note
also that any subset of a pretree is a pretree. Another example of
a pretree is the edge set of a tree. Not all pretrees are subsets
of trees. Indeed any linearly ordered set $(P,<)$ can be seen as a
pretree. 
\end{Ex}

\begin{Def} We say that a pretree $\cP$ is \textit{discrete} if for any
$x,y\in \cP$ there are finitely many $z\in \cP$ such that $xzy$.
\end{Def}

If there is no $z$ between $x,y\in \cP$ we say that $x,y$ are
adjacent.

Let $\cP$ be a countable discrete pretree. We recall briefly how can one pass
from $\cP$ to a tree (see \cite{B2} for a more general construction).

 We call a subset
$H\subset \cP$ a star if any $a,b\in H$ are adjacent. We define
now a tree $T$ as follows:

$$V(T)=\cP\cup \{\text {maximal stars of }\cP\}$$

$$E(T)=\{(v,H):v\in \cP, v\in H, H\text{ maximal star}\}$$

We show that $T$ is indeed a tree. Since $\cP$ is discrete $T$ is
connected. If $T$ contains a circuit then there are $x_1,...,x_n$
($n>2$) in $\cP$ such that $x_i$ is adjacent to $x_{i+1}$ and
$x_i$ is not adjacent to $x_{i+2}$ for all $i\in \Bbb Z _n$.

Since $x_i,x_{i+2}$ are not adjacent there is $y$ such that
$x_iyx_{i+2}$. If $y\neq x_{i+1}$ then either $x_iyx_{i+1}$ or
$x_{i+2}yx_{i+1}$ but both these are impossible. Hence
$x_ix_{i+1}x_{i+2}$ holds. We claim that $x_1x_{i-1}x_i$ holds for
all $i\leq n$. We argue by induction. Since $x_{i+1}\neq x_{i-1}$,
by 4, we have that $x_1x_{i-1}x_{i+1}$ holds. Since
$x_{i-1}x_ix_{i+1}$ holds for $n>i\geq 2$ by 4 either
$x_{1}x_ix_{i+1}$ or $x_{i-1}x_ix_{1}$ holds. However by induction
$x_1x_{i-1}x_i$ holds, so by 3, necessarily $x_{1}x_ix_{i+1}$. So
$x_{1}x_{n-1}x_{n}$ holds, contradicting our assumption that
$x_1,x_n$ are adjacent.

\begin{Ex} Note that `adding' the stars is necessary in order to get a tree from a pretree. Consider for example the case of a pretree $\cP$ consisting
of three mutually adjacent elements $x,y,z$. Then to get a tree one adds a new `star' vertex $w$ and joins it by edges to $x,y,z$.
\end{Ex}

\section{Cut sets}

As our objective in this section is to study the end structure of graphs we will restrict, without loss
of generality, to graphs that do not contain loops. Indeed if $\Gamma $ is a graph and $\Gamma '$ is the subgraph
of $\Gamma $ obtained from $\Gamma $ by erasing all loops of $\Gamma $ then there is an obvious 1-1 and onto map
from the ends of $\Gamma '$ to the ends of $\Gamma $. 

It will be convenient to replace edge cuts by cuts consisting of
midpoints of edges. We set up some notation: If $\Gamma =(V,E)$ is a graph then we have the incidence map
$\psi $ from the set $E$ of edges to the set of unordered pairs of vertices. So if $e$ is an edge $\psi (e)=\{v,u\}$ where
$v,u$ are the endpoints of $e$. In general $v=u$ is possible (when $e$ is a loop), however here since we assume that
$\Gamma $ has no loops, $v,u$ are distinct. If $K\subset V$ we say that $K$ is a vertex cut if the graph we obtain from $\Gamma $ by erasing all edges
incident to $K$ and with vertex set $V-K$ has more than one connected component. Abusing notation slightly we denote this new graph by
$\Gamma -K$. If $C$ is a component of $\Gamma - K$ we denote by $\partial C$ the set of vertices of $\Gamma $ which do not lie in $C$ and are incident to edges
that intersect $C$ (note that we may see $C$ as a subgraph of $\Gamma $). Finally we denote by $\bar C$ the graph obtained by adding to $C$ all edges that intersect $C$.
So the vertex set of $\bar C$ is $V(\bar C)=V(C)\cup \partial C$.

To replace edge cuts by vertex cuts we use the barycentric subdivision
of a graph:

\begin{Def} If $\Gamma $ is a graph the \textit{barycentric subdivision}
$\Gamma ^b$ of $\Gamma $ is the graph we obtain by subdividing
each edge of $\Gamma $ into two edges.
\end{Def}

More formally if $\Gamma =(V,E)$ then $\Gamma ^b=(V^b,E^b)$ where $V^b=V\cup E$
and $E^b=\{(e,v):e\in E, v\in \psi (e)\}$ where $\psi $ is the incidence function of the graph $\Gamma $.

If $K$ is a minimal cut of $\Gamma $ then $K'=K\cap V^b$ is a vertex cut of $\Gamma ^b$. We remark that if
$C$ is a component of $\Gamma ^b-K'$ then $K'=\partial C$.
%If $K$ is a vertex cut of
%$\Gamma $, we say that a connected subgraph $C$ of $\Gamma $ is a
%\textit{connected component} of $\Gamma -K$ if there is another
%non trivial subgraph $D$ such that $D\cup C=\Gamma $, $D\cap
%C\subset K$, $K\cap C$ is not a vertex cut of $C$ and $C$ is maximal with these properties. If $K$ is a
%vertex cut of $\Gamma $ and $C$ is a subgraph of $\Gamma $
%contained in a connected component of $\Gamma -K$ then the
%\textit{frontier} of $C$ in $\Gamma -K$ is $C\cap K$. We denote
%the frontier of $C$ by $\partial C$.

If $K$ is a vertex cut we say that an end $e$ is \textit{contained} in a component $C$ of $\Gamma -K$
if for any ray $r$ representing $e$ almost all vertices of $r$ are contained in $C$.

%If $K$ is an edge cut of
%$\Gamma $ then we say that $C$ is a \textit{connected component}
%of $\Gamma -K$ if $C$ is a connected component of $\Gamma -V(K)$
%not contained in $K$ (where $V(K)$ is the set of vertices of the
%edges in $K$). NOT NEEDED, DEFINED EARLIER!!!!!

%\begin{Def} Let $\Gamma =(V,E) $ be a connected graph. We say that $\Gamma $
%\textit{has more than 1 edge end} if there is a finite edge cut
%$K$ such that both components of $\Gamma -K$ are unbounded. A cut
%$K$ such that both components of $\Gamma -K$ are unbounded and
%which has the minimal possible number of points is called a
%\textit{minimal cut}.
%\end{Def}

\subsection{Equivalent Cuts}

\begin{Def} Let $\Gamma $ be a graph with more than one end. Let $K_1$, $K_2$ be
minimal cuts of $\Gamma $. We say that $K_1,K_2$ are
\textit{equivalent} if any two ends $e_1,e_2$ of $\Gamma $ are
separated by $K_1$ if and only if they are separated by $K_2$. We
write then $K_1\sim K_2$ and we denote the equivalence class of
$K_1$ by $[K_1]$.
\end{Def}

%In what follows we assume that all graphs are
%replaced by their barycentric subdivision.

We would like to associate to a graph $\Gamma $, in a canonical
way, a cactus $\mathcal{C}$ which encodes the minimal cuts of the graph
$\Gamma $. To be more precise we will encode minimal cuts up to
equivalence. We will proceed by defining a pretree. There is a
natural way to define betweeness of equivalence classes of minimal
cuts, which we describe now. Let $\mathcal { E}$ be the set of ends of $\Gamma $.
If $K$ is a minimal cut then $K$ partitions $\mathcal { E}$, so
we may write $\mathcal {E}=K^{(1)}\cup K^{(2)}$ where 2 ends are separated by $K$ if and only if they lie
in different sets of this partition.
Clearly if $L$ is equivalent to $K$ then (after relabelling) $K^{(1)}=L^{(1)},\, K^{(2)}=L^{(2)}$.

\begin{Def} Let $\Gamma $ be a graph and let $K,L,M$ be inequivalent minimal cuts of
$\Gamma $. We say that $[L]$ is \textit{between} $[K],[M]$ if, possibly after relabelling,
$$K^{(1)}\subset L^{(1)}\subset M^{(1)}\, .$$
\end{Def}
Clearly if $[L]$ is between $[K],[M]$ then we have also $K^{(2)}\supset L^{(2)}\supset M^{(2)}$.

It is easy to see that the axioms 1,2,3 of the
pretree definition are satisfied. However axiom 4 is not satisfied
because of `crossing' cuts. We define formally crossing cuts in the next section
and we show that such cuts have some surprisingly simple
structure. This will allow us to remedy the problem of crossing
cuts and define a pretree.

We give in the next lemmas an equivalent way to define betweeness.

\begin{Lem}\label{subset} Let $\Gamma $ be a graph and let $K,L$ be inequivalent minimal cuts of
$\Gamma $ such that $K^{(1)}\subset L^{(1)}$. Then for any
$L_1\in [L]$ there is some $K_1\in [K]$ such that $K_1$
intersects a single connected component of $\Gamma -L_1$.
\end{Lem}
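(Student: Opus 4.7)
The plan is an uncrossing argument based on the submodular inequality for vertex-cut boundaries, carried out in the barycentric subdivision $\Gamma^b$ so that $K$ and $L_1$ become vertex cuts. Fix any representative $K \in [K]$ and let $A, B$ denote the two components of $\Gamma^b - K$, labelled so that the $K^{(1)}$-ends lie in $A$; similarly let $M, N$ be the two components of $\Gamma^b - L_1$, with $L^{(1)}$-ends in $M$. The hypothesis $K^{(1)} \subset L^{(1)}$ translates into the key combinatorial fact that the quadrant $A \cap N$ contains no end of $\Gamma$: any end of $A$ is a $K^{(1)}$-end, hence an $L^{(1)}$-end, hence lies in $M$.

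As a candidate I would take $K_1 := \partial(A \cap M)$, the vertex boundary of the quadrant $A \cap M$ in $\Gamma^b$, and verify three things: (a) $K_1$ is an end cut with the same end-partition as $K$, so $K_1 \in [K]$ once shown minimal; (b) $|K_1| = |K|$, so $K_1$ is indeed minimal; (c) away from $L_1$, the set $K_1$ lies inside the single component $M$ of $\Gamma^b - L_1$. For (a), every $K^{(1)}$-end lies in $A \cap M$ by the key fact, while every $K^{(2)}$-end lies in $B$ (and if additionally in $L^{(2)}$ then in $N$), in either case outside $A \cap M$. For (b), apply
\[
|\partial(A \cap M)| + |\partial(A \cup M)| \le |\partial A| + |\partial M| = |K| + |L_1|.
\]
Because $A \cup M$ contains all $L^{(1)}$-ends and its complement $B \cap N$ contains all $L^{(2)}$-ends, $\partial(A \cup M)$ is a finite end cut, and so $|\partial(A \cup M)| \ge |L_1|$ by the minimality of $L_1$. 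This yields $|K_1| \le |K|$; the reverse inequality holds because $K_1$ is itself an end cut and $K$ is of minimum cardinality, so equality holds and $K_1 \sim K$. For (c), any vertex of $K_1$ lies in $K \cup L_1$, and any such vertex outside $L_1$ sits in the same $\Gamma^b - L_1$-component as its neighbour inside $A \cap M \subset M$, hence in $M$.

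The step I expect to be the main obstacle is the verification that $\partial(A \cup M)$ is genuinely an end cut — finite, and with ends on both sides — which is precisely where the emptiness of the quadrant $A \cap N$, and thus the hypothesis $K^{(1)} \subset L^{(1)}$, is essential; otherwise $A \cup M$ could swallow some $L^{(2)}$-ends and the submodular bound would not produce an end cut to compare with $L_1$. A minor nuisance to manage is the possibility that $K$ and $L_1$ share vertices, which in the subdivision does not affect the counting but requires mild care when identifying the component each boundary vertex belongs to. Once the four-quadrant picture is set up correctly, the remainder is a routine submodular uncrossing.
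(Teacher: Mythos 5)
Your proof is correct and follows essentially the same uncrossing argument as the paper: pass to the barycentric subdivision, form the four quadrants determined by $K$ and $L_1$, and use a cardinality count together with the minimality of end cuts to show that a quadrant boundary is again a minimal cut in $[K]$ lying in one side of $L_1$. The paper phrases the count via a connected component $U_1$ of $\Gamma^b - (K\cup L_1)$ and inclusion--exclusion on $\partial U_1 \cup \partial U_2 \subset K\cup L_1$, while you use the full quadrant $A\cap M$ and the submodular boundary inequality; these are interchangeable formulations of the same estimate, and the hypothesis $K^{(1)}\subset L^{(1)}$ enters at the same point in each (to guarantee the opposite boundary is a genuine end cut of size at least $|L_1|$).
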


\begin{proof}

It will be convenient to replace $\Gamma $ by its
barycentric subdivision and view edge cuts as vertex cuts of the
barycentric subdivision. To keep notation simple we keep denoting
the barycentric subdivision by $\Gamma $.

Let $L_1\in [L]$. Let $C_1,C_2$ be the two connected components
of $\Gamma -L_1$, where the ends of $L^{(1)}$ are contained in $C_1$. We set:
$$ c=|K\cap L_1|,\,\, n=|K|=|L_1|$$

Consider $\Gamma - (L_1\cup K)$.
Clearly there are components $U_1,U_2$ of $\Gamma - (L_1\cup K)$ such that
$U_1$ contains all ends in $K^{(1)}$ and $U_2$ contains all ends in $L^{(2)}$.
We have then 
$$\bd U_1\subset K\cup L_1,\,\, \bd U_2\subset K\cup L_1,\,\, \bd U_1\cap \bd U_2\subset K\cap L_1 $$
So $$2n-c\geq|\bd U_1\cup \bd U_2|= |\bd U_1|+ |\bd U_2|-|\bd U_1\cap \bd U_2|\geq 2n- |\bd U_1\cap \bd U_2|$$
It follows that $|\bd U_1\cap \bd U_2|=c$ and $\bd U_1\cap \bd U_2= K\cap L_1$. Since all inequalities
are equalities $|\bd U_1|=|\bd U_2|=n$ and $\bd U_1\in [K]$. So $K_1=\bd U_1$ satisfies the requirements of the lemma.

\end{proof}

\begin{Lem} Let $\Gamma $ be a graph and let $K,L,M$ be inequivalent minimal cuts of
$\Gamma $. Then $[L]$ is between $[K],[M]$ if and only if for any
$L_1\in [L]$ there are $K_1\in [K], M_1\in [M]$ such that $K_1$
intersects a single connected component $C_1$ of $\Gamma -L_1$, $M_1$
intersects a single connected component $C_2$ of $\Gamma -L_1$ and $C_1\ne
C_2$.
\end{Lem}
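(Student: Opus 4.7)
The plan is to prove both implications by leveraging Lemma \ref{subset} for the forward direction and a direct component-chasing argument for the backward direction. Throughout, I will replace $\Gamma$ by its barycentric subdivision so that all cuts are vertex cuts, and use that a minimal vertex cut $N$ satisfies $N=\bd U$ for each component $U$ of $\Gamma - N$.

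For the forward direction, assume (after relabelling) $K^{(1)}\subset L^{(1)}\subset M^{(1)}$, and fix $L_1\in [L]$ with components $D_1,D_2$ of $\Gamma - L_1$ whose ends make up $L^{(1)}$ and $L^{(2)}$ respectively. Since $K^{(1)}\subset L^{(1)}$, Lemma \ref{subset} produces $K_1\in [K]$ intersecting only one component; inspecting the proof (where $K_1=\bd U_1$ for the component $U_1$ of $\Gamma-(L_1\cup K)$ containing the ends of $K^{(1)}$), this component is forced to be $D_1$ because $U_1$ lies in $D_1$. Rewriting $L^{(1)}\subset M^{(1)}$ as $M^{(2)}\subset L^{(2)}$ and applying Lemma \ref{subset} the other way, we obtain $M_1\in [M]$ intersecting only $D_2$, so $C_1=D_1\ne D_2=C_2$.

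For the backward direction, fix any $L_1\in [L]$ and let $C_1,C_2$ be the components of $\Gamma - L_1$ that $K_1\in[K]$ and $M_1\in[M]$ respectively meet. Because $K_1\cap C_2=\emptyset$ and $C_2$ is connected, the whole of $C_2$ is contained in a single component, say $E$, of $\Gamma -K_1$. Call $E'$ the other component of $\Gamma - K_1$. Since every end of $L^{(2)}$ lies in $C_2\subset E$, no end of $L^{(2)}$ can lie in $E'$; writing $K^{(1)}$ for the class of ends contained in $E'$, this gives $K^{(1)}\cap L^{(2)}=\emptyset$, i.e.\ $K^{(1)}\subset L^{(1)}$. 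The inequivalence of $[K]$ and $[L]$ prevents equality. A symmetric argument with $M_1$ and $C_1$ yields $L^{(1)}\subset M^{(1)}$, so $[L]$ is between $[K],[M]$.

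The main thing to be careful about is matching labels correctly: one must verify that the single component $K_1$ hits really is the one containing $L^{(1)}$, and analogously for $M_1$ and $L^{(2)}$, so that $C_1\ne C_2$ falls out rather than being an extra hypothesis. The remaining subtlety in the backward direction is the passage from ``$K_1$ avoids $C_2$'' to ``all of $L^{(2)}$ lies on one side of $K_1$,'' which is immediate from connectedness of $C_2$ but is the geometric heart of the argument.
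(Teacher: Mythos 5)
Your proof is correct and follows essentially the same strategy as the paper's: Lemma~\ref{subset} produces $K_1$ and $M_1$ on opposite sides of $L_1$ for the forward direction, and connectedness of the component avoided by $K_1$ (resp.\ $M_1$) gives the end-containments $K^{(1)}\subset L^{(1)}$ and $L^{(1)}\subset M^{(1)}$ for the backward direction. The extra bookkeeping you supply in the forward direction --- pinning down which component $K_1$ lands in by tracing $U_1\subset D_1$ through the proof of Lemma~\ref{subset} --- is a detail the paper passes over silently, but it is the same underlying argument.
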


\begin{proof}
As before we replace $\Gamma $ by its
barycentric subdivision and view edge cuts as vertex cuts of the
barycentric subdivision. 

Suppose that for some $L_1\in [L]$ there are $K_1\in [K], M_1\in [M]$ such that $K_1$
intersects a single connected component $C_1$ of $\Gamma -L_1$, $M_1$
intersects a single connected component $C_2$ of $\Gamma -L_1$ and $C_1\ne
C_2$. Set $L^{(1)}$ to be all ends of $\Gamma $ contained in $C_1$.
Let $C_1'$ be the connected component of $\Gamma -K_1$ contained in $C_1$ and let
$C_2'$ be the connected component of $\Gamma -M_1$ contained in $C_2$. Set 
$K^{(1)}$ to be all ends of $\Gamma $ contained in $C_1'$ and $M^{(2)}$ to be all ends of $\Gamma $ contained in $C_2'$.
Then clearly $K^{(1)}\subset L^{(1)}\subset M^{(1)}$, so $[L]$ is between $[K],[M]$.

Conversely now, assume that $[L]$ is between $[K],[M]$. We have then $K^{(1)}\subset L^{(1)}\subset M^{(1)}$. Let $L_1\in [L]$ and let $C_1,C_2$
be the connected components of $\Gamma -L_1$, where $C_1$ contains all ends in $K^{(1)}$. By lemma \ref{subset} there is $K_1\in [K]$ such that
$K_1\subset C_1$. Since $M^{(2)}\subset L^{(2)}$ by lemma \ref{subset} again there is $M_1\in [M]$ such that
$M_1\subset C_2$.

\end{proof}

\subsection{Crossing cuts}

\begin{Def} Let $\Gamma $ be a graph and let $K,L$ be minimal cuts of
$\Gamma $. We say that $[K]$ \textit{crosses} $[L]$ if $K^{(i)}\cap L^{(j)} \ne \emptyset $ for all $i,j=1,2$.

\end{Def}

\begin{Rem} Clearly if $K,L$ are inequivalent minimal cuts either $[K], [L]$ cross or, after relabelling, 
$K^{(1)}\subset L^{(1)}$. From lemma \ref{subset} we have that the following are equivalent:

a. $[K]$ crosses $[L]$.

b. For some
$L_1\in [L]$ any $K_1\in [K]$ intersects both connected components
of $\Gamma -L_1$.

c. For any
$L_1\in [L]$ any $K_1\in [K]$ intersects both connected components
of $\Gamma -L_1$.

\end{Rem}

\begin{Lem}\label{basic} Let $K,L$ be minimal cuts of a graph $\Gamma $.
If $[K]$ crosses $[L]$ then for any $L_1\in [L]$ and any $K_1\in
[K]$, the following hold:

a. $|K_1|=2k$ for some $k\in \Bbb N$.

b. $K_1\cap L_1=\emptyset $, 
and the intersections of $K_1$ with both components of $\Gamma
-L_1$ contain $k$ elements. 

c. $\Gamma -(K_1\cup L_1)$ has
exactly 4 connected components, and each of these components
contains at least one end of $\Gamma $.
\end{Lem}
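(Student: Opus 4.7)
The plan is to run a submodular argument on the edge-boundary, in the same spirit as the proof of Lemma \ref{subset}. I would write $\delta(X)$ for the set of edges of $\Gamma$ with exactly one endpoint in a vertex set $X\subseteq V$, and take $A$ to be the vertex set of the component of $\Gamma-K_1$ on the $K^{(1)}$ side, $B$ the analogous set for $L_1$ and $L^{(1)}$. Then $\delta(A)=K_1$ and $\delta(B)=L_1$ each have $n$ edges. Using the equivalent characterizations of crossing given in the Remark, the hypothesis that $[K]$ crosses $[L]$ is exactly the statement that each of the four ``quadrants'' $A\cap B$, $A\setminus B$, $B\setminus A$, $V\setminus(A\cup B)$ contains an end of $\Gamma$, so $\delta$ of each quadrant is an end cut and hence has at least $n$ edges.

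The submodular inequality
$$|\delta(A\cap B)|+|\delta(A\cup B)|\leq|\delta(A)|+|\delta(B)|=2n,$$
combined with the two lower bounds of $n$, forces equality throughout, and its equality case rules out edges between $A\setminus B$ and $B\setminus A$. Applying the same reasoning with $B$ replaced by $V\setminus B$ rules out edges between $A\cap B$ and $V\setminus(A\cup B)$. Since an edge in $K_1\cap L_1$ would join diagonally opposite quadrants, $K_1\cap L_1=\emptyset$. Writing $\alpha$ for the number of $L_1$-edges on the $A$-side and $\beta,\gamma$ for the $K_1$-edges on the $B$- and $(V\setminus B)$-sides respectively, the identities $|\delta(A\cap B)|=\alpha+\beta=n$ and $|\delta(A\setminus B)|=\alpha+\gamma=n$ give $\beta=\gamma=:k$, so $|K_1|=\beta+\gamma=2k$; this proves (a) and (b).

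For (c), every edge outside $K_1\cup L_1$ has both endpoints in the same quadrant, so each component of $\Gamma-(K_1\cup L_1)$ lies in a single quadrant. If a quadrant $Q$ broke into several pieces in its induced subgraph, the piece $Q_0$ containing an end (which exists by crossing) would satisfy $|\delta(Q_0)|\geq n=|\delta(Q)|$ with $\delta(Q_0)\subseteq\delta(Q)$, so $\delta(Q_0)=\delta(Q)$ and any other piece $Q_1$ would have $\delta(Q_1)=\emptyset$ in $\Gamma$. But then $Q_1$ would be a union of connected components of $\Gamma$, impossible since $\Gamma$ is connected and $Q_1$ is a proper nonempty subset of $V$. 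So each quadrant is a single component, giving exactly four components of $\Gamma-(K_1\cup L_1)$, each containing an end.

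The main obstacle will be the careful extraction of information from the equality case of the submodular inequality and, for part (c), ruling out phantom end-free components inside a quadrant by invoking the connectedness of $\Gamma$; the remaining bookkeeping is routine.
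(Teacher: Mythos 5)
Your proof is correct and takes essentially the same approach as the paper's, namely bounding from below the boundaries of all four quadrants determined by $K_1$ and $L_1$ and squeezing them against the total size $|K_1|+|L_1|=2n$. The packaging differs: the paper passes to the barycentric subdivision and treats $K_1,L_1$ as vertex cuts, then writes down four explicit inequalities (one per quadrant) and adds them, whereas you stay with edge boundaries in $\Gamma$ itself and invoke submodularity of $X\mapsto|\delta(X)|$ twice (once for $B$ and once for $V\setminus B$); these are the same counting argument in two dresses. One place where your write-up is genuinely tidier is part (c): the paper asserts tersely that a disconnected quadrant would yield an end cut with fewer than $n$ edges, which is not quite right on its face since the piece $Q_0$ containing the end could a priori have $\delta(Q_0)=\delta(Q)$; your argument handles exactly this case by observing that then the remaining pieces have empty boundary, contradicting connectedness of $\Gamma$, which is the fix the paper's argument implicitly needs.
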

\begin{proof} It will be convenient to replace $\Gamma $ by its
barycentric subdivision and view edge cuts as vertex cuts of the
barycentric subdivision. To keep notation simple we keep denoting
the barycentric subdivision by $\Gamma $.

We denote the connected components of
$\Gamma -L_1$ by $C_1,C_2$. Let's say that ends in $L^{(1)}$ are contained in $C_1$ and ends in
$L^{(2)}$ are contained in $C_2$. Since $[K],[L]$ cross there are ends
$e_{ij}\in K^{(i)}\cap L^{(j)} $ where $i,j=1,2$.

%We remark that for any $K_2\in [K]$,
%$a_1,b_1$ are also contained in distinct components of $\Gamma -
%K_2$.
We denote the connected components of $\Gamma -K_1$ by $D_1,D_2$ where ends in $K^{(1)}$ are contained 
in $D_1$ and ends in $K^{(2)}$ are contained 
in $D_2$.

So $e_{ij}\in D_i\cap C_j$.

We set:
$$k_1=|K_1\cap C_1|,\, k_2=|K_1\cap C_2|,\,l_1=|L_1\cap D_1|,\,l_2=|L_1\cap
D_2|$$

We denote $n=|L_1|$ and $m=|L_1\cap K_1|$. Obviously
$$k_1+k_2+m=n,\,\, l_1+l_2+m=n$$

Let's pose further

$$m_1=|\partial (C_1\cap D_1)\cap K_1\cap L_1|$$
$$m_2=|\partial (C_1\cap D_2)\cap K_1\cap L_1|$$
We remark that
$$\partial (C_2\cap D_2)\cap K_1\cap L_1=\partial (C_1\cap D_1)\cap K_1\cap
L_1$$

$$\partial (C_1\cap D_2)\cap K_1\cap L_1=\partial (C_2\cap D_1)\cap K_1\cap
L_1$$

and $$m_1+m_2=m$$

We have also

$$|K_1|=k_1+k_2+m_1+m_2=n=|L_1|=l_1+l_2+m_1+m_2\,\,\, (*)$$

We remark that

$$|\partial (C_1\cap D_1)|=k_1+l_1+m_1$$
Since $e_{11}\in C_1\cap D_1$ and $\partial (C_1\cap D_1)$ separates
$e_{11}$ from $e_{12}$ we have that
$$k_1+l_1+m_1\geq n \,\,\,\,\, (1)$$
Considering similarly $C_2\cap D_1$, $C_1\cap D_2$ and $C_1\cap D_2$ we obtain
the inequalities:
$$k_2+l_1+m_2\geq n\,\,\,\,\, (2)$$
$$k_1+l_2+m_2\geq n \,\,\,\,\, (3)$$
$$k_2+l_2+m_1\geq n \,\,\,\,\, (4)$$

Adding up (1),(2),(3),(4) and using (*) we obtain
$$4n-2(m_1+m_2)\geq 4n$$

It follows that $m_1=m_2=0$. 

Further we have that necessarily (1), (2), (3), (4) are equalities.
From (1), (2) it follows that $k_1=k_2$. Similarly from (1), (3) we get that $l_1=l_2$.
This proves assertions a and b of the lemma.

Part c also follows since $|\bd (C_i\cap D_j)|=n$ for all $i,j=1,2$, $C_i\cap D_j$ is necessarily
connected, otherwise by considering its connected component containing $e_{ij}$ we would obtain an end
cut with less than $n$ edges, a contradiction.

%We remark now that if  $k_2<l_2$ we may replace $L_1$ by a cut
%with $l_1+k_2+m_1<n$ elements, which is a contradiction. Similarly
%if $k_2>l_2$ we may replace $K_1$ with a smaller cut, again a
%contradiction. So $k_2=l_2$. Finally if $C_2\cap D_2$ does not
%contain any end then we may replace $K_1$ by an equivalent cut
%that does not intersect $C_2$, again a contradiction. We conclude
%that $C_2\cap D_2$ contains an end, $b_2$, hence
%$$|\partial (C_2\cap D_2)|= n$$
%
%It follows that all (1),(2),(3) are also equalities. Using (*)
%again and adding up (2),(3) we obtain $m_1=0$ and adding (1),(4)
%similarly we have $m_2=0$. So $m=0$.

%We remark now that if $L_2\in [L]$ and $K_1\in [K]$ then
%$a_1,b_1,a_2,b_2$ lie in distinct components of $\Gamma -(L_2\cup
%K_1)$. Therefore $K_1$ intersects both components of $\Gamma
%-L_2$. This proves parts a of the lemma and also b by symmetry.
%Part c follows from the previous calculations since $K_1,L_1$
%could be any two elements of $[K],[L]$ by parts a,b.
%
%

\end{proof}

\begin{Cor} Let $\Gamma $ be a graph. If the cardinality of minimal cuts of $\Gamma $ is an odd number then
the set of minimal cuts with the betweeness relation defined
earlier is a discrete pretree. So the set of minimal cuts in this
case can be represented by a tree.
\end{Cor}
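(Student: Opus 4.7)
The corollary reduces to Lemma \ref{basic}(a), which forces any crossing pair of minimal cuts to have even cardinality. Under the odd-cardinality hypothesis no two inequivalent minimal cuts cross; equivalently, for any two inequivalent classes $[K],[L]$ exactly one of the four containments $K^{(i)}\subset L^{(j)}$ holds after relabeling sides. This is the only input from the hypothesis.

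Granted non-crossing, pretree axioms 1--3 are immediate from the definition of betweenness as a strict chain $K^{(1)}\subsetneq L^{(1)}\subsetneq M^{(1)}$. For axiom 4, suppose $[L]$ is between $[K],[M]$ with $K^{(1)}\subset L^{(1)}\subset M^{(1)}$ and let $[N]\ne[L]$. Non-crossing of $L,N$ makes some $L^{(i)}\cap N^{(j)}$ empty, and after possibly swapping the labels of $N$ this gives either $L^{(1)}\subset N^{(1)}$ or $N^{(1)}\subset L^{(1)}$. Chaining with $K^{(1)}\subset L^{(1)}$ in the first case, or with $L^{(1)}\subset M^{(1)}$ in the second, places $[L]$ between $[K],[N]$ or between $[N],[M]$ respectively.

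For discreteness, first note that any two classes $[L],[L']$ both between $[K]$ and $[M]$ are nested: $K^{(1)}\subset L^{(1)}\cap L'^{(1)}$ and $M^{(2)}\subset L^{(2)}\cap L'^{(2)}$ are both non-empty, so the empty intersection demanded by non-crossing must be of mixed type, giving $L^{(1)}\subset L'^{(1)}$ or $L'^{(1)}\subset L^{(1)}$. Suppose for contradiction this totally ordered chain is infinite, $[L_1],[L_2],\ldots$, with $K^{(1)}\subsetneq L_1^{(1)}\subsetneq L_2^{(1)}\subsetneq\cdots\subsetneq M^{(1)}$. By iterating Lemma \ref{subset} (together with its symmetric version, in which one starts from a representative of $[K]$ and produces a representative of $[L_i]$ sitting on one side of it), choose pairwise edge-disjoint representatives $K_1,L_{1,1},L_{2,1},\ldots,M_1$ with every $L_{i,1}$ lying in the $K^{(2)}$-component of $\Gamma-K_1$ and in the $M^{(1)}$-component of $\Gamma-M_1$. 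Pick any $v_K$ in the $K^{(1)}$-component of $\Gamma-K_1$ and any $v_M$ in the $M^{(2)}$-component of $\Gamma-M_1$. Because no edge of $L_{i,1}$ lies in the $K^{(1)}$-component, that component remains a connected subgraph of $\Gamma-L_{i,1}$, hence lies inside the $L_i^{(1)}$-component; similarly $v_M$ lies in the $L_i^{(2)}$-component. Thus every $L_{i,1}$ separates $v_K$ from $v_M$, so any finite path from $v_K$ to $v_M$ crosses each $L_{i,1}$, contributing a distinct edge by disjointness, bounding the number of $L_{i,1}$'s by the length of the path, a contradiction.

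Once the pretree is shown to be discrete, the construction of Section 2 (adjoining one star vertex for each maximal star) produces a tree encoding the set of minimal cuts, which is the content of the corollary. The main technical obstacle is the simultaneous choice of pairwise edge-disjoint representatives $L_{i,1}$ all sitting in the middle region between $K_1$ and $M_1$; it requires a careful inductive application of Lemma \ref{subset} in both directions, where at each step the new representative must be chosen not only on the correct side of $K_1$ and $M_1$ but also disjoint from all previously chosen $L_{j,1}$.
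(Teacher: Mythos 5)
Your overall plan is the same as the paper's: the corollary is stated immediately after Lemma~\ref{basic} precisely because part~(a) of that lemma forces crossing cuts to have even cardinality, so under the odd-cardinality hypothesis no two inequivalent classes cross, and betweenness becomes a nested chain condition. Your verification of axioms~1--3 and the case analysis for axiom~4 (using non-crossing of $L$ and $N$ to force $L^{(1)}\subset N^{(1)}$ or $N^{(1)}\subset L^{(1)}$ after relabelling, then chaining) is correct and is the intended argument.

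The place where your write-up goes beyond what the paper records is discreteness, and that is where there is a real gap. Your argument rests on choosing pairwise edge-disjoint representatives $K_1,L_{1,1},L_{2,1},\ldots,M_1$, and you are candid that this is the ``main technical obstacle.'' But Lemma~\ref{subset} does not deliver disjointness: it produces, for a given $L_1\in[L]$, a representative $K_1=\bd U_1$ of $[K]$ that meets only one component of $\Gamma^b-L_1$, yet this $K_1$ is contained in $\overline{C_1}$ and can still share midpoint-vertices with $L_1$ (indeed the proof shows only that $\bd U_1\cap\bd U_2=K\cap L_1$, which need not be empty). Iterating the lemma, or invoking its symmetric form, does not obviously shrink these intersections, and nothing in the statement of Lemma~\ref{subset} lets you choose a new $L_{i,1}$ disjoint from an arbitrary \emph{finite} collection of previously fixed cuts. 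Without disjointness, your final step (each $L_{i,1}$ contributes a distinct edge of a fixed finite path from $v_K$ to $v_M$) does not give a bound, since several of the $L_{i,1}$ could cross $\gamma$ on the same edge. So the discreteness claim, as you argue it, is not yet proved; one either needs a genuinely different argument for finiteness of the interval $\{[L]:[K][L][M]\}$ or a correct construction of mutually disjoint (or at least ``laminar'') representatives. This is precisely the step the paper elides by calling the pretree ``clearly discrete,'' so your attempt usefully flags a subtlety, but your sketch does not close it.
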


%\begin{Lem} If $[K]$ crosses $[L]$ then $[L]$ crosses $[K]$.
%\end{Lem}
%\begin{proof}
%\end{proof}

\subsection{Cyclic sets}

%We denote the barycentric subdivision of the graph $\Gamma $ by
%$\Gamma ^b$.

%\begin{Def} Assume that the cardinality of a minimal cut of a
%graph $\Gamma $ is $2k$. A subgraph $S$ of $\Gamma ^b$ is called
%$k$-\textit{cyclic} if it is a disjoint union of $m\geq 4$ finite subgraphs, $S=S_{1}\cup S_{2}...\cup S_{m}$
% and there are connected subgraphs $M_{1}$,..., $M_{m}$ of $\Gamma ^b$
% so that for each $i\in \Bbb Z_m$:
%\begin{enumerate}
%   \item $S_i\cap M_i=\{s_{i1},...,s_{ik}\}, S_i\cap
%   M_{i+1}=\{s_{i1}',...,s_{ik}'\}$ with $s_{ij},s_{ij}'$ in
%   $V(\Gamma ^b)-V(\Gamma )$.
%
%    \item $M_i\cup M_{i+1}$ separates $S_i$ from $\bigcup _{j\ne
%    i}S_j$ and $S_i\cup S_{i-1}$ separates $M_i$ from $\bigcup _{j\ne
%    i}M_j$
%
%
%    \item $\bigcup (M_{i}\cup S_i)=\Gamma ^b$
%    and for each $i$, $M_i$ contains an end of
%    $\Gamma $ while no $S_i$ contains an end
%    of $\Gamma $.
%
%\end{enumerate}

\begin{Def} Assume that the cardinality of a minimal cut of a
graph $\Gamma $ is $2k$. A subgraph $S$ of $\Gamma ^b$ is called
$k$-\textit{cyclic} if it is a union of $m\geq 4$ finite subgraphs, $S=S_{1}\cup S_{2}...\cup S_{m}$
 and there are connected subgraphs $M_{1}$,..., $M_{m}$ of $\Gamma ^b$
 so that for each $i\in \Bbb Z_m$:
\begin{enumerate}
   \item $S_i\cap M_i=\{s_{i1},...,s_{ik}\}, S_i\cap
   M_{i+1}=\{s_{i1}',...,s_{ik}'\}$ with $s_{ij},s_{ij}'$ in
   $V(\Gamma ^b)-V(\Gamma )$.

    \item $M_i\cup M_{i+1}$ separates $S_i$ from $\bigcup _{j\ne
    i}S_j$ and $S_i\cup S_{i-1}$ separates $M_i$ from $\bigcup _{j\ne
    i}M_j$

    \item $\bigcup (M_{i}\cup S_i)=\Gamma ^b$
    and for each $i$, $M_i$ contains at least one end of
    $\Gamma $.

\end{enumerate}

We will often simply say that $S$ is cyclic rather than $k$-cyclic.
We will say that the $M_i$'s  are the \textit{beads} and the $S_i$'s are the \textit{elements} of the cyclic
set $S$. 

\end{Def}

\begin{figure}[h]
%\begin{center}
\includegraphics[width=4.2in ]{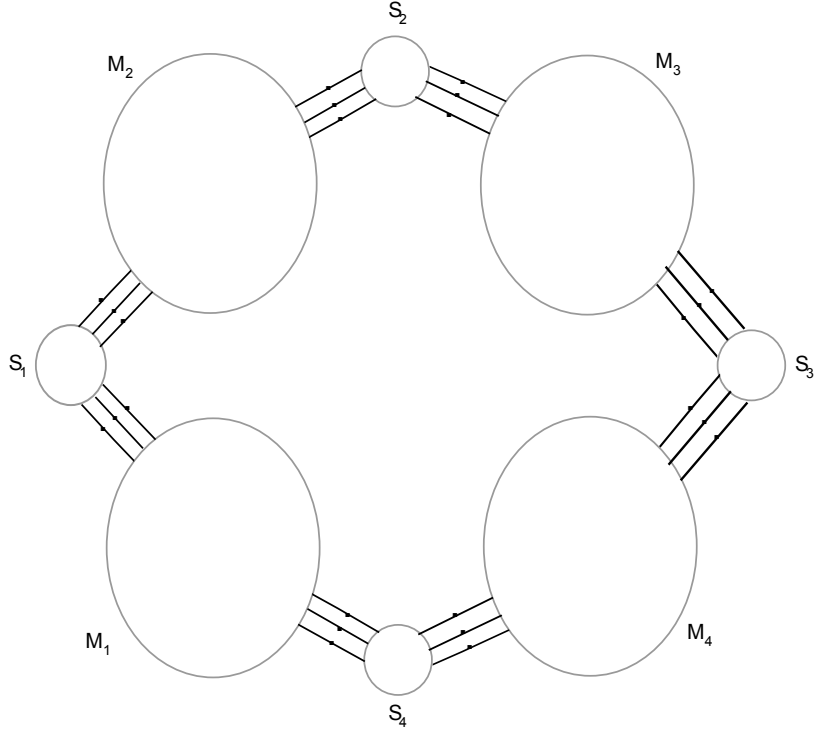}
%\end{center}
\vspace{0.1in}
 \caption{A $3$-cyclic set with 4 beads}
\end{figure}

\begin{Lem}\label{union} Let $K_1,K_2$ be  minimal cuts such that $[K_1],[K_2]$ cross each other.
Then both $K_1,K_2$, are contained in a cyclic set $S$.
\end{Lem}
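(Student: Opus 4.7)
The plan is to build a cyclic set $S$ explicitly with $m=4$ beads and $4$ elements, using the detailed quadrant structure supplied by Lemma \ref{basic}. Pass first to $\Gamma^b$ and view $K_1,K_2$ as vertex cuts consisting of midpoints, with $|K_1|=|K_2|=2k$ and $K_1\cap K_2=\emptyset$. Write $C_1,C_2$ for the components of $\Gamma^b-K_2$ and $D_1,D_2$ for the components of $\Gamma^b-K_1$. By Lemma \ref{basic}, the four quadrants $Q_{ij}=C_i\cap D_j$ are precisely the connected components of $\Gamma^b-(K_1\cup K_2)$, each contains an end of $\Gamma$, and each of the four half-cuts $K_1\cap C_i$, $K_2\cap D_j$ has exactly $k$ midpoints.

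I would then list the quadrants in the cyclic order $Q_1=C_1\cap D_1$, $Q_2=C_2\cap D_1$, $Q_3=C_2\cap D_2$, $Q_4=C_1\cap D_2$, so that consecutive quadrants $Q_i,Q_{i+1}$ are separated by exactly one half-cut of size $k$. Define the beads as the closures $M_i=\overline{Q_i}$ (adjoining the boundary midpoints and their incident edges in $\Gamma^b$), and the elements as the four half-cuts in matching cyclic order:
\[
S_1=K_2\cap D_1,\quad S_2=K_1\cap C_2,\quad S_3=K_2\cap D_2,\quad S_4=K_1\cap C_1,
\]
each taken as the $k$-vertex subgraph of $\Gamma^b$ consisting of those midpoints. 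Then $S=S_1\cup S_2\cup S_3\cup S_4=K_1\cup K_2$ contains both cuts as required, and $m=4\geq 4$.

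It remains to verify the three axioms. Each midpoint $v\in S_i$ has in $\Gamma^b$ exactly two neighbors, which lie one in $Q_i$ and one in $Q_{i+1}$; hence $v\in M_i\cap M_{i+1}$, giving $S_i\cap M_i=S_i\cap M_{i+1}=S_i$ as a $k$-element subset of $V(\Gamma^b)-V(\Gamma)$, which is condition (1). For condition (2), removing $M_i\cup M_{i+1}$ isolates $S_i$ from the other $S_j$'s because every neighbor of every vertex of $S_i$ lies in $M_i\cup M_{i+1}$; conversely, any path leaving $M_i$ must exit $Q_i$ through $\partial Q_i\subset K_1\cup K_2$, which coincides with $S_{i-1}\cup S_i$. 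Condition (3) is immediate: every vertex of $\Gamma^b$ is either in some $Q_i\subset M_i$ or is a midpoint in $K_1\cup K_2$ hence in some $S_i$; every edge of $\Gamma^b$ joins a midpoint to one of its two quadrant neighbors and so lies in the corresponding $M_i$; and each $M_i$ contains an end by Lemma \ref{basic}(c).

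The only genuinely delicate point is setting up the cyclic labelling of quadrants and half-cuts correctly so that consecutive beads share exactly one half-cut of $k$ midpoints; once the indices are aligned as above, all three axioms reduce to the statement that every midpoint of $K_1\cup K_2$ has both neighbors in the two adjacent quadrants, which is immediate from the barycentric subdivision.
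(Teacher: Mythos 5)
Your proof is correct and takes the same approach as the paper: pass to $\Gamma^b$, apply Lemma \ref{basic} to obtain the quadrant decomposition with disjoint $K_1,K_2$, set $S=K_1\cup K_2$, and take the four beads to be the closures of the four quadrants in cyclic order. The paper's proof is a two-line version of yours (it defines the four $M_i$'s as $\bar C_i\cap\bar D_j$ and asserts that Lemma \ref{basic} gives the cyclic-set axioms, and it has an evident typo, listing $\bar C_1\cap\bar D_1$ twice); you have simply carried out the verification of conditions (1)--(3) that the paper leaves to the reader, including the key observation that each midpoint of $K_1\cup K_2$ has its two $\Gamma^b$-neighbours in the two adjacent quadrants, which is what makes the axioms hold.
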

\begin{proof} As usual we see $K_1,K_2$ as vertex cuts of $\Gamma ^b$.
Let $C_1,C_2$ be the connected components of $\Gamma ^b
-K_1$ and let $D_1,D_2$ be the connected components of $\Gamma ^b
-K_2$. Recall that if $C$ is a connected component of $\Gamma ^b -K$ we denote by
$\bar C$ the graph which is the union of $C$ with all edges that intersect $C$. 

Then we can take $S=K_1\cup K_2$ and $$M_1= \bar C_1\cap \bar D_1$$
$$M_2= \bar C_1\cap
\bar D_2$$
$$M_3=\bar C_2\cap
\bar D_2$$
$$M_4=\bar C_1\cap
\bar D_1$$

By lemma \ref{basic} $S, M_1,M_2,M_3,M_4$ satisfy the definition
of a cyclic set.
\end{proof}

We recall the edge version of Menger's theorem (see eg. \cite{Bo}, thm. 7.17, p.170):
\begin{Menger}  Let $\Gamma $ be a graph and let $a,b$ be vertices of $\Gamma $. Then the 
maximum number of edge disjoint paths joining $a,b$ is equal to the minimum number of
edges in an edge cut separating $a,b$.
\end{Menger}

We say that a cyclic set $S=S_1\cup...\cup S_m$ is \textit{contained} in a cyclic set
$S'=S_1'\cup...\cup S_n'$ if for each $i=1,...,m$ there is some $j=1,...,n$ such that $S_i\subset S_j'$.
\begin{Lem}\label{max} Any $k$-cyclic set $S$ is contained in a
maximal $k$-cyclic set $\Sigma $.
\end{Lem}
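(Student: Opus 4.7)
I apply Zorn's lemma to the set $\mathcal{P}$ of $k$-cyclic sets of $\Gamma^b$ containing $S$, partially ordered by the containment relation defined just before the statement. Since $S \in \mathcal{P}$, what remains is to produce, for an arbitrary chain $\{S^\alpha\}_{\alpha \in I}$ in $\mathcal{P}$, an upper bound which is itself a $k$-cyclic set.

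\textbf{Limit construction.} For $\alpha \le \beta$ in the chain, the relation $S^\alpha$ contained in $S^\beta$ assigns to each element $S_i^\alpha$ of $S^\alpha$ a unique element $S_{\phi_{\alpha\beta}(i)}^\beta$ of $S^\beta$ containing it; uniqueness is forced by the pairwise disjointness of elements (an immediate consequence of axiom (2), since beads separate distinct elements), and the maps $\phi_{\alpha\beta}$ compose coherently and preserve the circular order on elements. I then take, for each ``thread'' $j$ of compatible indices across the chain, the increasing union $\Sigma_j = \bigcup_\alpha S_{i(\alpha,j)}^\alpha$ as the $j$-th element of $\Sigma$. The beads $\mathcal{M}_j$ of $\Sigma$ are assembled from the corresponding beads $M_{i(\alpha,j)}^\alpha$ together with the pieces of old beads that are absorbed into the growing elements at later stages of the chain.

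\textbf{Verification and main obstacle.} One must then check the three cyclic-set axioms for $\Sigma = \bigcup_j \Sigma_j$ with beads $\mathcal{M}_j$. Axioms (2) and (3) descend from the corresponding axioms at each stage by the standard argument that any finite witness to a failure in the limit would already appear at some $S^\alpha$ in the chain. The delicate step, which I regard as the main obstacle, is axiom (1): namely $|\Sigma_j \cap \mathcal{M}_j| = k$ in the limit, together with the requirement that these intersection midpoints lie in $V(\Gamma^b)-V(\Gamma)$. For this I would use Menger's theorem together with the arithmetic of Lemma \ref{basic}: the set $\Sigma_j \cup \Sigma_{j'}$, for $j'$ across the cycle from $j$, remains a minimal end cut of cardinality $2k$ separating the ends in adjacent beads, and this cardinality together with the equalities coming from the four inequalities in the proof of Lemma \ref{basic} forces $|\Sigma_j \cap \mathcal{M}_j| = k$. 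Once (1) is verified, $\Sigma \in \mathcal{P}$ is an upper bound of the chain, and Zorn's lemma yields the desired maximal $k$-cyclic set.
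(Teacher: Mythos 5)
The core problem with the Zorn's lemma approach is that you have not justified, and in fact cannot in general justify, that an upper bound of a chain of $k$-cyclic sets is again a $k$-cyclic set. By definition a $k$-cyclic set is a union of finitely many ($m \geq 4$) finite subgraphs. Along a strictly increasing chain in an infinite graph, the number $m_\alpha$ of elements of $S^\alpha$ can grow without bound (containment only requires each element of $S^\alpha$ to sit inside \emph{some} element of $S^\beta$, so $S^\beta$ is free to insert new elements along the cycle), and even if $m_\alpha$ stabilizes, the individual elements $\Sigma_j = \bigcup_\alpha S^\alpha_{i(\alpha,j)}$ can become infinite. In either case the union is not a $k$-cyclic set and there is no upper bound in $\mathcal{P}$. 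Your ``main obstacle'' paragraph attends to verifying axiom (1) in the limit, but that is not where the argument breaks; it breaks one step earlier, at the existence of a limit object of the right form.

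The paper does something quite different and sidesteps this entirely: it shows that there are only \emph{finitely many} $k$-cyclic sets containing $S$, after which maximality is immediate without Zorn. The mechanism is the one you gesture at but apply in the wrong place. Choose one bead $M_2$ of $S$, collapse $S_1 \cap M_2$ and $S_2 \cap M_2$ to two vertices $a,b$, and use the edge version of Menger's theorem in the two components to produce $k$ edge-disjoint paths $p_1,\dots,p_k$ on one side and $q_1,\dots,q_k$ on the other, each joining the two $k$-point sets. One then shows that for any $k$-cyclic $S'$ containing $S$, every separating $k$-set $S_i' \cap M_i'$ must meet each of these $2k$ arcs exactly once (since suitable pairs of them form minimal cuts that cross cuts built from $S$), hence is contained in the finite set $p_1 \cup \dots \cup p_k \cup q_1 \cup \dots \cup q_k$. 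As $S'$ is determined by these $k$-sets, there are only finitely many candidates for $S'$. To repair your proof you would need precisely this containment-in-finitely-many-paths argument, at which point Zorn's lemma becomes superfluous.
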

\begin{proof} Let's say that $S=S_1\cup ...\cup S_m$ is a cyclic set of a graph $\Gamma $.
Let $M_2$ be a bead of $S$ and let $S_1\cap M_2=\{s_{1},...,s_{k}\}, S_2\cap
M_{2}=\{t_{1},...,t_{k}\}$ with $s_{i},t_{i}$ in
$V(\Gamma ^b)-V(\Gamma )$. We identify all vertices $s_{1},...,s_{k} $ to a single vertex $a$ and
all vertices $t_{1},...,t_{k} $ to a single vertex $b$ to obtain a new graph $\Gamma '$.
$\Gamma '-\{a,b\}$ has two connected components $C_1,C_2$ (where, say, $C_1$ is obtained from $M_2$ by the vertex
identifications indicated above).

% and let $K$ be the minimal cut in $S$
%which intersects $M_1$ (where we see $K$ as a set of edges of
%$\Gamma $). Then $K=K_1\cup K_2$ when $K_1,K_2$ are separated by
%$V(M_1)$ and $|K_1|=|K_2|=k$.

%If $e=[a,b]\in K_1,f=[c,d]\in K_2$ with $a,c\in M_1$ then there
%are simple paths $p_1,p_2$ such that $p_1$ joins $a,c$ in $M_1$
%and $p_2$ joins $b,d$ in $\Gamma -M_1$. So the edges $e,f$ lie in
%a cycle $C$ which `goes around' $S$.
%We order the edges of $K_1$ as $e_1=[a_1,b_1],...,e_k=[a_k,b_k]$
%and of $K_2$ as $f_1=[c_1,d_1],...,f_k=[c_k,d_k]$ where
%$a_1,...,a_k,c_1,...,c_k\in M_1$. 

%If $C_1,C_2$ are the connected
%components of $\Gamma -\{a_1,...,a_k,c_1,...,c_k \}$ consider the
%graphs obtained by identifying in $C_1,C_2$ all vertices
%$a_1,...,a_k$ to a single vertex $a$ and all vertices
%$c_1,...,c_k$ to a single vertex $c$.
Then no set of less than $k$ edges
separates $a,b$ in $C_1$ or $C_2$. Therefore, by the edge version
of Menger's theorem, there are edge disjoint simple paths
$p_1,...,p_k$ in $C_1$ and $q_1,...,q_k$ in $C_2$ such that for
each one of these paths one endpoint lies in $\{s_1,...,s_k\}$ and
the other lies in $\{t_1,...,t_k\}$. We lift the paths $p_1,...,p_k$ and 
$q_1,...,q_k$ to $\Gamma $ and we keep denoting them the same way.
We remark now that if $M_i$, $(i\neq 1, i\in \Bbb Z _k)$ is another bead of $S$ then at least one of
$(M_i\cap S_i)\cup \{s_1,...,s_k\}$, $(M_i\cap S_i)\cup \{t_1,...,t_k\}$
is a minimal cut. It follows that $M_i\cap S_i$ intersects each one of
$q_1,...,q_k$. Similarly $M_i\cap S_{i-1}$ intersects each one of $q_1,...,q_k$.

Let $S'=S_1'\cup...\cup S_n' $ be a cyclic set containing $S$. Clearly $n\geq m$ since
distinct elements of $S$ are contained in distinct elements of $S'$. This is because the elements
of $S'$ do not contain any minimal cuts.

Let $S_i'$ be an element of $S'$. For any $j\in \Bbb Z _n, j\neq i$, $$K_{ij}=(S_i'\cap M_i')\cup (S_{j}'\cap M_{j}')$$
is a minimal cut (where $M_i',M_j'$ are beads of $S'$). Choosing $j,p,r$ appropriately we may find a minimal cut:
$$L_{pr}=(S_p\cap M_p)\cup (S_r\cap M_r)$$ so that $K_{ij}, L_{pr}$ cross each other. Specifically if $S_i'$ contains come
$S_t$ we may pick $S_j'$ so that it contains $S_{t+2}$ and take $p=t+1,r=t+3$. If $S_i'$ contains no $S_t$ then there are
$i_1,i_2$ so that $i$ is between $i_1,i_2$, some $S_p$ is contained in $S_{i_1}'$ and $S_{p+1}$ is contained in $S_{i_2}'$.
Take then $r=p+1$ and $j$ so that $S_j'$ contains $S_{p+2}$.

 It follows that
$S_i'\cap M_i'$ intersects the union of the arcs $p_1\cup q_1\cup...\cup p_k\cup q_k$ ($i=1,...,k$) at $k$ points,
so it is contained in this union. By the same reasoning, the same holds for $S_i'\cap M_{i+1}'$. Since $S'$ is determined
by the sets $S_i'\cap M_{i+1}', S_i'\cap M_{i}'$ we have that there are finitely many cyclic sets containing $S$. Therefore
there is a maximal such set $\Sigma $ containing $S$. 
\end{proof}

\begin{Lem}\label{hyp-in} Let $\Sigma $ be a maximal cyclic set of $\Gamma $ 
and let $K$ be a minimal cut crossing some minimal cut contained in $\Sigma $. Then $K$ is equivalent to a minimal cut contained in $\Sigma $.
\end{Lem}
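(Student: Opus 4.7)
The plan is to argue by contradiction. Suppose $K$ is not equivalent to any minimal cut contained in $\Sigma$; I will produce a cyclic set strictly containing $\Sigma$, violating maximality, or otherwise derive a direct contradiction. Write $\Sigma = S_1 \cup \cdots \cup S_n$ with beads $M_1, \ldots, M_n$ in cyclic order. Up to equivalence, minimal cuts contained in $\Sigma$ correspond bijectively to the circularly consecutive bipartitions of $\{1, \ldots, n\}$: each such cut has the form $(S_i \cap M_{i+1}) \cup (S_j \cap M_{j+1})$, and since every bead contains an end, the equivalence class of the cut is determined by the induced partition of ends. Call a bead $M_p$ \emph{pure} if all its ends lie on one side of $K$, and \emph{split} if it has ends on both sides.

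If some bead $M_p$ is split, let $L_p = (S_{p-1} \cap M_p) \cup (S_p \cap M_p)$; this is a minimal cut of $\Gamma^b$ of size $2k$ isolating the ends of $M_p$. The hypothesis that $K$ crosses some $L \subset \Sigma$ forces the existence of beads on each side of $K$ outside $M_p$ as well (otherwise one side of $K$ would sit entirely inside $M_p$, making $K$ parallel to every $\Sigma$-cut). Combined with $M_p$ being split, this shows $K$ crosses $L_p$, and Lemma \ref{basic} then yields $K \cap L_p = \emptyset$ and exactly $k$ midpoints of $K$ inside $M_p$, cutting $M_p$ into two pieces each containing an end. I would then refine $\Sigma$ by replacing the bead $M_p$ with these two pieces and inserting a new element between them built from $K \cap M_p$, with connecting paths supplied by Menger's theorem as in the proof of Lemma \ref{max}; the result is a cyclic set strictly containing $\Sigma$, contradicting maximality.

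Otherwise every bead is pure, so the beads split into two sets $\mathcal{A}_1$ and $\mathcal{A}_2$ according to which side of $K$ their ends lie on. If this partition is circularly consecutive, then $K$ induces the same partition of ends as the $\Sigma$-cut between the two arcs, so $K$ is equivalent to a cut in $\Sigma$, contradicting the assumption. Otherwise choose beads $M_a, M_b, M_c, M_d$ in cyclic order with $M_a, M_c \in \mathcal{A}_1$ and $M_b, M_d \in \mathcal{A}_2$, and consider the two $\Sigma$-cuts $L'$ separating the arc from $M_a$ to $M_b$ from its complement, and $L''$ separating the arc from $M_b$ to $M_c$ from its complement. Both are crossed by $K$. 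Applying Lemma \ref{basic} to each, and tracking how the $2k$ midpoints of $K$ are forced to distribute among the four components of $\Gamma^b - (L' \cup L'')$, one obtains an overdetermined linear system which, combined with the $2k$-lower bound on the boundary of any end-containing region, yields a contradiction.

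The main obstacle will be the split-bead case: rigorously verifying that the proposed $(n+1)$-bead subgraph is a cyclic set satisfying all three defining axioms. The delicate point is ensuring that $S_{p-1} \cap M_p$ and $S_p \cap M_p$ sit compatibly with respect to the new element---ideally each entirely within one of the two new pieces---which is not automatic and requires careful use of the four-quadrant conclusion of Lemma \ref{basic} applied to $K$ and $L_p$, possibly combined with a Menger-theoretic rerouting of the connecting midpoints analogous to that in the proof of Lemma \ref{max}.
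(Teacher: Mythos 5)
Your proposal follows the paper's strategy in the split-bead case: argue by contradiction, locate a bead $M_p$ whose ends $K$ separates, use Lemma \ref{basic} applied to $K$ against $L_p = (S_{p-1}\cap M_p)\cup (S_p\cap M_p)$ to find $k$ midpoints of $K$ inside $M_p$, and insert those $k$ midpoints as a new element of the cyclic set, contradicting maximality. However, the step you flag as the main obstacle --- showing that $S_{p-1}\cap M_p$ and $S_p\cap M_p$ end up in distinct pieces of the split bead so that the enlarged family really is a cyclic set --- is precisely where the paper's proof invests essentially all of its effort: it breaks into two subcases depending on whether the half of $K$ outside $M_p$ itself splits the ends of another bead $M_j$ or instead separates the ends of two adjacent beads $M_j,M_{j+1}$, and in each subcase uses Menger's theorem to route paths from one boundary set into a region untouched by $K$, then invokes Lemma \ref{basic} once more to place the other boundary set. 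Your outline does not yet supply a substitute for this, so the split-bead case is genuinely incomplete as written; you have correctly located the difficulty but not resolved it.

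On the other hand, your insistence on treating the case where every bead is pure but the induced bead-partition is circularly non-consecutive is a real improvement in care over the paper, which simply asserts (``Then there is some bead of $\Sigma$\dots such that $K$ separates some ends of $M_i$'') that a split bead exists. That assertion is not automatic: if no bead is split, $K$ induces a nontrivial partition of the beads, and only when that partition is consecutive is $K$ manifestly equivalent to a $\Sigma$-cut. Your counting sketch for ruling out the non-consecutive case does work when filled in. Take $L'$ and $L''$ two transverse $\Sigma$-cuts, each crossed by $K$; by Lemma \ref{basic} $K\cap L'=K\cap L''=\emptyset$, $K$ has $k$ points on each side of each of $L',L''$, and each of $L',L''$ has $k$ points on each side of $K$. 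Writing $p_a,p_b,p_c,p_d$ for $|K|$ in the four quadrants of $\Gamma^b-(L'\cup L'')$, and $x_1,x_2$ (resp.\ $y_1,y_2$) for the sizes of the $C_1$-portions of the two arcs of $L'$ (resp.\ $L''$), one gets $p_a=p_c$, $p_b=p_d$, $p_a+p_b=k$, $x_1+x_2=y_1+y_2=k$; the four end-cut bounds $|\partial(C\cap Q)|\geq 2k$ for the four end-bearing quadrant-pieces then force $p_a=p_c=0$, $x_1=y_1=k$, $x_2=y_2=0$, and the bound for the quadrant opposite to $M_a$ collapses to $0\geq 2k$, a contradiction. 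So your extra case is both necessary and fixable, but in both halves of the proof the substantive verification is still left to the reader.
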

\begin{proof}
Let's say that $$\Sigma =S_1\cup...\cup S_m$$
Assume that $K$ is not equivalent to any minimal cut contained in $\Sigma $. Then there is some bead of $\Sigma $, say $M_i$ such that
$K$ separates some ends of $M_i$. If $$K'=(M_i\cap S_i)\cup (M_i\cap S_{i-1})$$
then $K$ crosses $K'$ so by lemma \ref{basic} $K=K_1\cup K_2$, $K_1$ is contained in $M_i$, $K_2$ is contained in $$\bigcup _{j\neq i}M_j$$
and $|K_1|=|K_2|$. Let $$A=M_i\cap S_i,\,B=M_i\cap S_{i-1} $$
We claim that $K$ separates each vertex of $A$ from each vertex of $B$. 
We distinguish two cases. If $K$ separates some ends of some bead $M_j$, $j\neq i$ then $K_2\subset M_j$.
It follows that $K$ does not intersect at least one of the beads $M_{i-1}, M_{i+1}$. Let's say it does not intersect $M_{i-1}$.
If $M_{i-1}\cap S_{i-1}=C$ then by Menger's lemma each vertex of $B$ is connected to some vertex of $C$ by a path that does not intersect $K$. It follows that
all vertices of $B$ are connected in the same component of $\Gamma - K$. By lemma \ref{basic} all vertices of $A$ are contained in the
same component of $\Gamma - K$ too. If $K$ does not intersect $M_{i+1}$ we argue similarly using $A$ rather than $B$.

Assume now that $K$ does not separate ends of any $M_j$ with $j\neq i$. Then for some $j\neq i,i-1$ $K$ separates all ends in $M_j$ from
all ends in $M_{j+1}$. It follows that $K$ crosses the cut

$$L=(M_{j+1}\cap S_{j+1})\cup (M_j\cap S_{j-1}) $$
If $L_1=M_{j+1}\cap S_{j+1}$ then, by Menger's lemma, each vertex in $L_1$ is connected by a path that does not
intersect $K$ to some vertex of $B$. It follows that
all vertices of $B$ are contained in the same component of $\Gamma - K$. By lemma \ref{basic} all vertices of $A$ are contained in the
same component of $\Gamma - K$ too.

We see then that in both cases $K_1$ separates $A$ from $B$ in $M_i$.

It follows that we can enlarge $\Sigma $ by adding $K_1$. We obtain a cyclic set
$$\Sigma '=S_1\cup...\cup S_{i-1}\cup K_1\cup S_i\cup ...\cup S_m $$
This contradicts the maximality of $\Sigma $.
\end{proof}

\begin{Lem}\label{unique} Let $\Sigma $ be a maximal cyclic set of $\Gamma $ containing a given $k$-cyclic set $S$ 
and let $K$ be a minimal cut which crosses a minimal cut contained in $\Sigma $. Then $K$ is contained in $\Sigma $.
In particular $S$ is contained in  a unique maximal cyclic set.
\end{Lem}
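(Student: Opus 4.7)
The plan is first to promote the equivalence furnished by Lemma \ref{hyp-in} to actual containment, by pushing the maximality of $\Sigma$ one step further, and then to deduce uniqueness as a direct corollary.

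By Lemma \ref{hyp-in} there is a minimal cut $K'' \subset \Sigma$ with $K \sim K''$. Suppose, for contradiction, that $K \ne K''$, so $K \not\subset \Sigma$. Writing $\Sigma = S_1 \cup \ldots \cup S_m$ with beads $M_1, \ldots, M_m$, the cut $K''$ takes the form $S_p \cup S_q$ for some $p \ne q$. Equivalent minimal cuts cannot cross, for then they would separate different ends, so $K$ and $K''$ do not cross and $K$ partitions the beads of $\Sigma$ into the same two consecutive arcs as $K''$ does.

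Now exploit the hypothesis: $K$ crosses some $K' \subset \Sigma$, so Lemma \ref{basic} yields $K = K^a \cup K^b$ with $K^a, K^b$ disjoint of size $k$ each, lying on opposite sides of $K'$. Invoking the Menger paths $p_1, q_1, \ldots, p_k, q_k$ constructed in the proof of Lemma \ref{max}, which traverse the cycle of $\Sigma$ through its beads, I would trace the halves of $K$ and argue that $K^a$ lies in a single bead of $\Sigma$ (namely $M_p$ or $M_{p+1}$, matching the position of $S_p$) and similarly $K^b$ in $M_q$ or $M_{q+1}$; each is a genuine $k$-element cut separating its bead into two pieces. I can then enlarge $\Sigma$ by inserting $K^a$ and $K^b$ as new elements, subdividing the affected beads, and the axioms of a $k$-cyclic set for the enlarged system follow from Lemma \ref{basic} together with the ambient cyclic structure. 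This contradicts the maximality of $\Sigma$, so $K = K'' \subset \Sigma$.

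For the uniqueness statement, suppose $\Sigma_1, \Sigma_2$ are maximal cyclic sets both containing $S$, and let $K$ be any minimal cut contained in $\Sigma_1$. If $K$ crosses some cut of $S$, then since $S \subset \Sigma_2$ the main claim applied to $\Sigma_2$ forces $K \subset \Sigma_2$. If $K$ crosses no cut of $S$, then $K$ lies in the refinement of a single bead of $S$ inside $\Sigma_1$, and in this case $K$ still crosses some other cut of $\Sigma_1$; a second application of the main claim, together with the fact that the crossing cuts involved are themselves in $\Sigma_2$ by the previous case, propagates $K$ into $\Sigma_2$. Hence $\Sigma_1 \subset \Sigma_2$, and symmetry gives $\Sigma_1 = \Sigma_2$.

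The main obstacle is the middle step: verifying rigorously that the halves of $K$ can be inserted into $\Sigma$ to yield a valid $k$-cyclic set. This requires a careful local analysis of how each half intersects the beads of $\Sigma$, and the essential rigidity comes from the Menger paths already employed in Lemma \ref{max}, which pin down the admissible positions of any new ``half-cut'' in the cyclic structure.
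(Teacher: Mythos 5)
Your overall plan — use crossing, Lemma \ref{basic}, Menger paths, and maximality to force $K$ into $\Sigma$ — is in the right spirit, but the key localization step is both under-justified and, as stated, pointing in the wrong direction, and it is here that the paper's proof genuinely diverges from yours.

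First, a small logical slip: "Suppose $K \ne K''$, so $K \not\subset \Sigma$" does not follow; the correct hypothesis for contradiction is simply $K \not\subset \Sigma$. More seriously, to split $K$ the paper does not apply Lemma \ref{basic} to $K$ and one arbitrary crossing cut $K' \subset \Sigma$. A single application of Lemma \ref{basic} with one $K'$ only tells you the halves of $K$ lie on opposite sides of $K'$, which are enormous regions spanning many beads, and gives you no handle on where within those regions the halves sit. Instead, the paper applies Lemma \ref{basic} twice, with the two specifically engineered cuts $L_1=(S_{i-1}\cap M_i)\cup(S_{i+1}\cap M_{i+1})$ and $L_2=(S_{j-1}\cap M_j)\cup(S_{j+1}\cap M_{j+1})$, which isolate the small neighbourhoods $M_i\cup S_i\cup M_{i+1}$ and $M_j\cup S_j\cup M_{j+1}$, thereby confining $K_1$ and $K_2$ to these blocks. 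Without a device like $L_1, L_2$, your $K^a, K^b$ float free.

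Second, even after this localization, the correct conclusion is $K_1\subset S_i$ (an \emph{element}), not that $K^a$ sits inside a single \emph{bead} $M_p$ or $M_{p+1}$. Indeed, your claim cannot be right: if $K^a$ lay inside a bead $M_p$ and genuinely "separated its bead into two pieces" in the sense of separating ends, then $K$ would separate ends of $M_p$, which is exactly the situation Lemma \ref{hyp-in} has already ruled out. And $K_1 \subset S_i$ is not something the paper obtains directly; it emerges only at the very end of a maximality argument. Concretely, the paper uses the Menger paths $p_t$ (running from $S_{i-1}\cap M_i$ to $S_{i+1}\cap M_{i+1}$, across two beads and the element between them) to show that the boundary of the component $M_i'$ of $\Gamma-(S_{i-1}\cup S_i\cup K)$ meets each $p_t$ in exactly two points, and then forms the potentially larger element $S_i'$ (the union of finite components of $\Gamma-(M_i'\cup M_{i+1}')$). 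Maximality forces $S_i'=S_i$ and hence $K_1\subset S_i$. So the paper's contradiction comes from \emph{enlarging an existing element}, not from \emph{inserting new thin elements} as you propose; the insertion route is what the paper uses for Lemma \ref{hyp-in}, but it does not transfer here because the halves of $K$ need not be contained in beads nor separate ends of beads.

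You were right to flag the middle step as the main obstacle; the point is that the fix is not "more careful bookkeeping with the same strategy" but a different choice of auxiliary cuts $L_1,L_2$ and a different maximality mechanism. The uniqueness portion of your argument is fine once the main claim is available and matches the paper in outline.
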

\begin{proof}

Let's say that $$\Sigma =S_1\cup...\cup S_m$$
By lemma \ref{hyp-in} $K$ is equivalent to $K'$ where 
$K'$ is contained in a union $S_i\cup S_j$ for some $i,j$, $j\neq i, i+1$. 
We set $$S_{i-1}\cap M_i=\{a_1,...,a_k\},\,S_{i+1}\cap M_{i+1}=\{b_1,...,b_k\}$$
$$S_{j-1}\cap M_j=\{a_1',...,a_k'\},\,S_{j+1}\cap M_{j+1}=\{b_1',...,b_k'\}$$

We remark that both minimal cuts $$L_1=\{a_1,...,a_k,b_1,...,b_k\},\, L_2=\{a_1',...,a_k',b_1',...,b_k'\}$$
cross $K$. So by lemma \ref{basic} $K$ can be written as disjoint union of two sets, $K_1,K_2$ where
$K_1$ is contained in the connected component $C$ of $\Gamma -L_1$ containing $M_i$ and 
$K_2$ is contained in the connected component $D$ of $\Gamma -L_2$ containing $M_j$. 

Applying Menger's theorem as earlier we see that there are edge disjoint paths $p_1,...,p_k$ in $C$
and $q_1,...,q_k$ in $D$
such that $p_i$ joins $a_i$ to $b_i$, and $q_i$ joins $a_i'$ to $b_i'$ for all $i=1,...,k$ (this of course up to
relabeling of the $a_i$'s, $b_i$'s). Since $K$ separates $M_i$ from $M_{i+1}$ and $M_j$ from $M_{j+1}$ $K_1$ intersects
each $p_i$ at one point and $K_2$ intersects each $q_i$ at one point. 

Let $M_i', M_{i+1}'$ be the infinite components of $\Gamma -(S_{i-1}\cup S_i\cup K)$ contained in $M_i$, $M_{i+1}$ respectively.
Then $$\partial M_i'\subset K_1\cup S_i\cup S_{i-1}$$
Clearly $$S_{i-1}\cap M_i\subset \partial M_i'$$
We remark now that if $\partial M_i'$ intersects a path $p_t$ in 3 points then the first point is $a_t$ the second lies on $K_1$ and the third point say $c$ lies on $S_1$.
However in this case there is a path joining $b_t$ to $a_t$ which does not intersect $K$. Indeed take $p_t$ from $b_t$ to $c$ and then continue
with a path in $M_i'$ joining $c$ to $a_i$. This is clearly a contradiction since $K$ separates $a_t,b_t$.
It follows that $\partial M_i'$ intersects each $p_t$ at at most 2 points, and since $\partial M_i'$ has at least $2k$ points we conclude
that $\partial M_i '$ intersects each $p_t$ at exactly 2 points. We argue similarly for $M_{i+1}'$.
We conclude that if $S_i'$ is the union of the finite connected components of $$\Gamma -(M_i'\cup M_{i+1}')$$
then $S_i\subset S_i'$ and $|S_i'\cap M_i'|=k$ and $|S_i'\cap M_{i+1}'|=k$. So we may replace $S_i$ by $S_i'$ in the cyclic set $\Sigma $. This contradicts the
maximality of $\Sigma $ unless $S_i'=S_i$ and $K_1\subset S_i$. Arguing similarly for $K_2$ we have that
$K_2\subset S_j$, so $K$ is contained in $\Sigma $.

We show now that $\Sigma $ is unique. Let $\Sigma _1, \Sigma _2$ be two maximal cyclic sets containing $S$.
Then for any element $S_i$ in $\Sigma _1$ there is some $S_j\in \Sigma _1$ such that the cuts
$$K_1=(S_i\cap M_i)\cup (S_i\cap M_i)$$

$$K_1=(S_i\cap M_{i+1})\cup (S_i\cap M_i)$$
cross some minimal cut contained in $\Sigma _2$. By lemma \ref{hyp-in} and the proof above it follows that $K_1,K_2$ are contained in $\Sigma _2$. This implies
that for all $i$, $S_i$ is contained in $\Sigma _2$ so $\Sigma _1\subset \Sigma _2$. By symmetry $\Sigma _2\subset \Sigma _1$ so $\Sigma _1=\Sigma _2$.

\end{proof}

%\begin{Lem} \label{enclose} Suppose that minimal cut sets have $2k$ elements and let $S$ be a
%maximal $k$-cyclic set. If some minimal cut $K'$ is contained in $S$
%and another minimal cut $K$ crosses $K'$ then $K$ is also contained in
%$S$.
%\end{Lem}
%\begin{proof} Let $M_1,...,M_m$ be the beads of $S$. If $K$ is not
%contained in $S$ it intersects (in fact separates) at least one of
%the beads of $S$. Using this intersections we can enlarge $S$, a
%contradiction.
%\end{proof}

\begin{Cor} If a minimal cut $K$ crosses some other minimal cut $L$ then
every minimal cut $K'\in [K]$ is contained in a cyclic set.
\end{Cor}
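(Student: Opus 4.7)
The plan is to combine Lemmas \ref{union}, \ref{max}, and \ref{unique} and to exploit the fact that the crossing relation depends only on the partition of ends, so equivalent cuts cross exactly the same cuts.

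First I would apply Lemma \ref{union} to the pair $K,L$: since $[K]$ crosses $[L]$, the union $K\cup L$ is contained in some cyclic set $S$. Then by Lemma \ref{max}, $S$ is contained in a maximal cyclic set $\Sigma$. In particular, $L$ is a minimal cut contained in $\Sigma$, and this will be the ``anchor'' cut inside $\Sigma$ that I use to locate all other cuts equivalent to $K$.

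Next I would observe the key remark: if $K'\in [K]$, then $K'$ induces the same partition $K^{(1)}\cup K^{(2)}$ of the set of ends as $K$ does. Since the crossing relation $[K]$ crosses $[L]$ was defined purely in terms of the intersections $K^{(i)}\cap L^{(j)}$, and these sets depend only on the equivalence classes, $K'$ also crosses $L$. So $K'$ is a minimal cut crossing a minimal cut contained in the maximal cyclic set $\Sigma$.

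Finally, by Lemma \ref{unique}, any minimal cut that crosses a minimal cut inside a maximal cyclic set must itself be contained in that maximal cyclic set. Hence $K'\subset \Sigma$, which is a cyclic set, completing the proof. The corollary is essentially a bookkeeping consequence of the three preceding lemmas; no step should present genuine difficulty, the only point to verify carefully being the trivial fact that the crossing relation is a function of equivalence classes.
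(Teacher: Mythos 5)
Your proposal is correct and follows essentially the same path as the paper's own proof: apply Lemma \ref{union} and Lemma \ref{max} to place $K,L$ in a maximal cyclic set, note that $K'$ crosses $L$ because crossing depends only on the induced partition of ends, and then invoke Lemma \ref{unique} (the paper also cites Lemma \ref{hyp-in}, which Lemma \ref{unique} already subsumes) to conclude $K'$ lies in the same maximal cyclic set.
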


\begin{proof} By lemmas \ref{union} and \ref{max} $K,L$ are contained in a maximal cyclic set $S$.
 $K'$ crosses $L$, so by lemmas \ref{hyp-in},\ref{unique} $K'$ is contained in $S$ too.
\end{proof}

\begin{Def} We say that a minimal cut $K$ is \textit{isolated} if it does not cross any other minimal cut.
\end{Def}

We can now define a pretree $\cP$ `encoding' all minimal cuts of $\Gamma $. The elements
of $\cP$ are the maximal cyclic sets of $\Gamma $ and the equivalence classes of the isolated minimal
cuts of $\Gamma $. We make now some observations that will allow us to define betweeness in $\cP$.

Note that if $S=S_1\cup...\cup S_n$ is a maximal cyclic set with beads $M_1,...,M_n$ then each end of $\Gamma $ is contained in
exactly one of the $M_i$'s. So $S$ partitions the set of ends $$\mathcal {E}=M^{(1)}\cup...\cup M^{(n)}$$ where we denote
by $M^{(i)}$ the set of ends contained in $M_i$. If $K$ is a minimal cut not contained in $S$ such that $K$ separates
some ends $e_1,e_2$ that lie in some $M^{(i)}$ then $K$ many not separate any two ends $e_1',e_2'$ that lie in some
$M^{(j)}$ with $i\neq j$. Indeed in that case $K$ would cross the cut $(M_i\cup S_i)\cup (M_i\cup S_{i-1})$ so by lemma \ref{hyp-in} $K$ would
be contained in $S$. We remark further that if $K$ does not separate ends that lie in any $M^{(i)}$ then there is some $i$ such that
$K$ separates all ends in $M^{(i)}$ from all ends in $M{(j)}$ for all $j\neq i$. Indeed if not, as before, $K$ crosses some minimal cut contained in $S$, so
$K$ lies in $S$. We conclude that in all cases the following holds: if $K$ is a minimal cut not contained in $S$ and if we denote by
$K^{(1)}\cup K^{(2)}$ the partition of ends of $\Gamma $ induced by $K$ then $K^{(1)}$ or $K^{(2)}$ is contained in some $M^{(i)}$.
By lemma \ref{subset} it follows further that if $K$ is a minimal cut that is not contained in $S$ then it is equivalent to a cut
$K'$ such that $K'\subset M_i$ for some $i$.

Let $S'=S_1'\cup...\cup S_n'$ be another maximal cyclic set. Then for each minimal cut $K$ contained in $S'$ there is an $i$ such that
$K$ is equivalent to a minimal cut contained in $M_i$. However if there are minimal cuts $K,L$ in $S'$ such that, say $K$ is equivalent to
a minimal cut in $M_i$ and $L$ is equivalent to a minimal cut in $M_j$ with $j\neq i$ then there is a minimal cut in $S$ that crosses
a minimal cut in $S'$. But this implies that $S=S'$. It follows that all minimal cuts in $S'$ are equivalent to cuts that are contained in a single bead
$M_i$ of $S$.
%
%Let $S'=S_1'\cup...\cup S_n'$ be another maximal cyclic set. Then each minimal cut $K$ contained in $S'$ either separates some ends $e_1,e_2$ of a single $M_i$ or seprates the ends
%of $M_i$ from all ends in $M{(j)}$ for all $j\neq i$. Let's say that the beads of $S'$ are $M_1',...,M_k'$, $(k\geq 4)$. Consider now the cuts
%$K_j=(M_j'\cup S_j')\cup (M_j'\cup S_{j-1}')$. We claim that there is a unique $i$ such that each $K_j$ either separates ends of $M_i$ or 
%seprates the end contained $M_i$ from all ends contained in $M^{(r)}$ for all $r\neq i$. Indeed if this is not the case, since $k\geq 4$ there is some
%minimal cut in $S'$ that crosses some cut in $S$. But this implies that $S=S'$.

If $S_1,S_2,S_3$ are distinct elements of $ \cP$ we define betweeness as follows: If $S_1$ is cyclic, then it is between $S_2,S_3$ if the minimal cuts in $S_2,S_3$
are equivalent to cuts which are contained in distinct beads of $S_1$.
If $S_1=[K_1]$ is not cyclic then $S_1$ is between $S_2,S_3$ if all minimal cuts $ K_2$ in $S_2$, $K_3$ in $S_3$
are equivalent to cuts that lie in distinct components of $\Gamma -K_1$.

% To define the betweeness relation in $\cP$ we observe that if $K$ is a minimal cut of $\Gamma $ then $\Gamma -K$
%is a union of two connected components $C_1,C_2$. The set of ends of $\Gamma $ can be written as a disjoint union $\mathcal E _1\cup \mathcal E_2$ where
%$\mathcal E _1$ is the set of ends contained in $C_1$ and $ \mathcal E_2$ is the set of ends contained in $C_2$. So
%$K$ partitions the set of ends of $\Gamma $ as $\mathcal E _1\cup \mathcal E_2$.
%
%If $S_1,S_2,S_3\in \cP$ we say that $S_1$ is between $S_2,S_3$ if for some minimal cuts $K_1,K_2,K_3$ in $S_1,S_2,S_3$ respectively
%which induce partitions of the set of ends of $\Gamma $, say $\mathcal E_1\cup \mathcal E_2, \mathcal F_1\cup \mathcal F_2, \mathcal G_1\cup \mathcal G_2$
%we have $ \mathcal F_1\subset \mathcal E_1$ and $\mathcal G_1 \subset \mathcal E_2$.
% distinguish two cases:
%If $S_1$ is a maximal cyclic set, we say that $S_1$ is between $S_2,S_3$ if
%there is a minimal cut $K$ in $S_1$ which crosses some other minimal cut of $S_1$ such that $S_2,S_3$ intersect
%distinct connected components of $\Gamma -K$. If $S_1$ is an isolated minimal cut we say that $S_1$ is between $S_2,S_3$ if
%there is a minimal cut $K$ in $S_1$ such that $S_2,S_3$ intersect
%distinct connected components of $\Gamma -K$.
\begin{Thm} \label{final} $\cP$ with the betweeness relation defined above is a pretree.
\end{Thm}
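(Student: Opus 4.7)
The plan is to reformulate the betweeness relation via a \emph{side map}. For each $p\in\cP$ and $q\in\cP\setminus\{p\}$, define $s_p(q)$ to be the bead of $p$ (if $p$ is a maximal cyclic set) or the component of $\Gamma-K_p$ (if $p=[K_p]$ is isolated) into which the cuts representing $q$ can be placed up to equivalence. Both definitions of betweeness on $\cP$ then read uniformly as $xyz\iff s_y(x)\ne s_y(z)$.

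I would first verify that $s_p(q)$ is well-defined in each of the four combinations of (cyclic, isolated). For cyclic $p$ this is precisely the content of the paragraphs preceding the statement. For isolated $p=[K_p]$ and isolated $q=[K_q]$, Lemma~\ref{subset} applies directly since $K_p,K_q$ do not cross. For isolated $p$ and cyclic $q$, observe that $K_p$ cannot cross any cut in $q$: otherwise, by Lemma~\ref{hyp-in}, $K_p$ would be equivalent to a cut in $q$, which crosses other cuts in $q$, contradicting isolation. Combined with Lemma~\ref{subset} and the preceding discussion, $K_p$ can be placed strictly inside a single bead $M_j$ of $q$; then the connected subgraph $q$ is disjoint from $K_p$, so $q$ and all its cuts lie in one component of $\Gamma-K_p$.

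Axioms 1 and 2 follow immediately from the side-map reformulation. Axiom 4 is automatic: if $s_z(x)\ne s_z(y)$ and $w\ne z$, then $s_z(w)$ cannot simultaneously equal both $s_z(x)$ and $s_z(y)$, so $z$ lies between $w$ and at least one of $x,y$.

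The substantive axiom is 3: if $y$ is between $x,z$, then $z$ is not between $x,y$, equivalently $s_z(x)=s_z(y)$. I would argue by cases on the types of $y,z$. When $y,z$ are both isolated, after relabeling $K_y^{(1)}\subset K_z^{(1)}$; Lemma~\ref{subset} places $K_y$ in the $K_z^{(1)}$-component and $K_z$ in the $K_y^{(2)}$-component, and $s_y(x)\ne s_y(z)$ forces $x$'s cuts into $K_y^{(1)}\subset K_z^{(1)}$, so $s_z(x)=s_z(y)$. The two mixed cases are analogous: place the isolated cut strictly inside the relevant bead and note that all cuts of the cyclic set lie on the external component of the isolated cut. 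The hardest case is both cyclic, with beads $M_1,\ldots,M_n$ and $N_1,\ldots,N_m$ and $s_y(z)=M_j$, $s_z(y)=N_k$. Here one needs the mirror-image property $M^{(i)}\subset N^{(k)}$ for $i\ne j$ and $N^{(l)}\subset M^{(j)}$ for $l\ne k$, which follows by applying to the bead-boundary cut $\partial N_l$ of $z$ the observation that each such cut has one end-partition side contained in $M^{(j)}$ (because $\partial N_l$ is equivalent to a cut inside $M_j$). Then $s_y(x)=M_i$ with $i\ne j$ forces $x$'s end partition to have one side inside $M^{(i)}\subset N^{(k)}$, yielding $s_z(x)=N_k=s_z(y)$.

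The main obstacle is the both-cyclic case, specifically the mirror-image property for two distinct maximal cyclic sets. Lemma~\ref{unique} underlies this: distinct maximal cyclic sets can interact only through a single bead of each, which is what forces the symmetric bead-containment structure and, in turn, axiom 3.
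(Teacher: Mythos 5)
Your approach is essentially the paper's: both proofs verify the four pretree axioms directly, dispatching axioms 1, 2, and 4 quickly and concentrating on axiom 3 via a case analysis on the types (cyclic vs.\ isolated) of the elements involved, relying on the same preparatory material (Lemma \ref{subset}, Lemma \ref{hyp-in}, Lemma \ref{unique}, and the paragraphs preceding the theorem establishing that a maximal cyclic set's cuts, or an isolated cut, always land in a single bead or component of any other element of $\cP$). The side-map reformulation you introduce is a convenient repackaging of the paper's betweeness definition rather than a different route, and your mirror-image argument supplies detail that the paper compresses into the single assertion that the cuts of $S_1,S_2$ are equivalent to cuts contained in a common bead or component of $S_3$.
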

\begin{proof} Axioms 1 and 2 of the pretree definition obviously hold.
We show that axiom 3 holds. Assume that $S_1$ is between $S_2,S_3$. We will show that $S_3$ is not between $S_1,S_2$.
Assume first that $S_1$ is cyclic.
Since $S_1$ is between $S_2,S_3$ then the minimal cuts in $S_2,S_3$ are equivalent to minimal cuts which are contained in distinct beads, say $M_i,M_j$ of $S_1$.
This implies that for any minimal cut in $S_3$ the minimal cuts in $S_1,S_2$ are equivalent to minimal cuts that are contained in the same bead of $S_3$ if
$S_3$ is cyclic or in the same component of $\Gamma -K$ for $K\in S_3$ if $S_3$ is not cyclic. In both cases $S_1S_3S_2$ does not hold.
Assume now that $S_1=[K]$ where $K$ is an isolated cut. Then the minimal cuts in $S_2, S_3$ are equivalent to cuts that lie in distinct components
of $\Gamma - K$. Then, if $S_3$ is cyclic, the minimial cuts in $S_1, S_2$ are equivalent to minimal cuts that lie in the same bead of $S_3$ so $S_1S_3S_2$ does not hold.
Similarly in $S_3=[K]$ all minimal cuts in $S_1,S_2$ are equivalent to minimal cuts that lie in the same component of $\Gamma -K$. So $S_1S_3S_2$ does not hold
in this case either.

%
%If $S_1$ is cyclic then we may pick $K$ so that it crosses some other
%minimal cut $K'$ in $S_1$. Now if $L$ is a minimal cut in $S_3$ then $L$ is not equal to $K$, so
%$C_2$ and $K$ intersect the same connected component of $\Gamma -L$. It follows that $S_2S_3S_1$ does not hold in this case.
%Assume now that $S_1$ is an equivalence class of an isolated cut $K$. By lemma \ref{basic} there is a minimal cut $L$ of $S_3$ such that
%for all minimal cuts $K_1$ in $S_1$, $K_2$ in $S_2$, $K_1,K_2$ do not intersect distinct components of $\Gamma -L$, so $S_2S_3S_1$ does not hold.
%This shows that axiom 3
%is satisfied.

We show finally axiom 4. Assume that $S_2S_1S_3$ holds and that $S_4\neq S_1$. If $S_1$ is cyclic then then the minimal cuts in $S_2,S_3$ are equivalent to cuts contained in distinct
beads of $S_1$. If all minimal cuts in $S_4$ are equivalent to cuts contained in the same bead as $S_2$ then $S_3S_1S_4$ holds. Otherwise $S_2S_1S_4$ holds.
If $S_1$ is an equivalence class of an isolated cut $K$ then the minimal cuts in $S_2,S_3$ are equivalent to cuts contained in distinct components
of $\Gamma -K$. If the minimal cuts in $S_4$ are equivalent to minimal cuts contained in the same component of $\Gamma -K$ as the minimal cuts of $S_2$ then $S_3S_1S_4$ holds. Otherwise $S_2S_1S_4$ holds. This shows that axiom 4 is satisfied.

%For axiom 4 note that if $S_2S_1S_3$ holds then there is a minimal cut $K$ in $S_1$ so that $S_2,S_3$ intersect
%distinct connected components $C_2,C_3$ respectively of $\Gamma -K$. Assume first that $S_1$ is cyclic.
%Then if $S_4\neq S_1$, $S_4$ is contained either in $C_2$ or in $C_3$, so $S_2S_1S_4$ or $S_3S_1S_4$ holds.
%Assume now that $S_1$ is an equivalence class of an isolated cut. If $S_4\neq S_1$
\end{proof}

Clearly the pretree $\cP$ is discrete, so it can be completed to a tree $T$ encoding all minimal cuts of $\Gamma $.
We give now a detailed description of $T$. The vertices of $T$ are of three types: isolated cuts, cyclic sets and `star' vertices (see section 2 for the
`star' vertices). We remark that if $S$ is a cyclic set in $\cP$, $S$ is adjacent to the isolated cuts that correspond to its beads.
So if $S=S_1\cup ...\cup S_n$ and $M_i$ is a bead of $S$ then $S$ is adjacent to the isolated cut $(M_i\cap S_{i-1})\cup (M_i\cap S_{i})$.
It follows that all star vertices adjacent to $S$ have degree 2. If $K$ is an isolated cut of $\cP$ and $H$ is a star of $\cP$ containing $K$ then
either $H$ consists of a cyclic set $S$ and $K$ or $H$ consists of at least 3 isolated cuts.

To retain the cyclic structure of the crossing cuts we replace $T$ by a cactus $\mathcal{C}$ as follows:
A cyclic set $S$ of $\cP$ gives a cycle $C$ of $\mathcal{C}$ with vertices corresponding to the beads of the cyclic set $S$.
Each vertex of $C$ is joined to the corresponding star vertex. In this way we obtain a cactus. We further simplify this cactus as follows:
If $K$ is an isolated cut adjacent to a cyclic set $S$ in $\cP$ then $K$ is joined to $S$ by a path of two edges (from $K$ to the star vertex and then
from the star vertex to $S$). We collapse all these 2-edge paths joining isolated cuts to cyclic sets.

%The isolated cuts correspond to edges of the cactus. One has a betweeness relation defined as in $\cP$.
%A set of elements $S=\{K_1,...,K_n\}$ of the pretree $\cP$ is a star if there is no element of $\cP$ between $K_i,K_j$ for any
%$1\leq i,j\leq n$, and $S$ is maximal with this property. Note that two cyclic sets are never adjacent. Stars of $\cP$ correspond to vertices of $\mathcal{C}$, 
%so the edges $\{K_1,...,K_n\}$ are glued along one of their endpoints. Note that for each bead of a cyclic set there is an isolated
%cut edge incident at the bead vertex. 

This is because such isolated cuts are already represented in the cycle (by the two edges adjacent
to the bead).  For symmetry's sake finally we `double' all separating edges of the cactus. Clearly now we
have a 1-1 correspondence between the minimal cuts of $\Gamma $ and the minimal edge cuts of $\mathcal{C}$. We state this formally:

\begin{Thm}\label{cactus} Let $\Gamma $ be a graph. Then there is a cactus $\mathcal{C}$, an onto map $f$ from the ends
of $\Gamma $ to the union of the ends of $\mathcal{C}$ with the end vertices of $\mathcal{C}$ and a 1-1 and onto map $g$ from equivalence classes
of minimal cuts of $\Gamma $ to the minimal edge cuts of $\mathcal{C}$ so that two ends $e_1,e_2$ of $\Gamma $ are separated by a minimal
cut $K$ if and only if $f(e_1),f(e_2)$ are separated by $g([K])$. Moreover any automorphism of $\Gamma $ induces an automorphism of $\mathcal{C}$.
\end{Thm}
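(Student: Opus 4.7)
The plan is to assemble the cactus $\mathcal{C}$ directly from the pretree $\cP$ produced by Theorem \ref{final}, then define the maps $f, g$ and verify each of the assertions (separation compatibility, surjectivity of $f$, bijectivity of $g$, and $Aut(\Gamma)$-equivariance) using the structural lemmas \ref{basic}, \ref{hyp-in}, \ref{unique}.

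First I would check that $\cP$ is discrete. Given two elements $S, S'$ of $\cP$, any element between them is represented (via Lemma \ref{subset}) by a minimal cut one of whose sides lies strictly between fixed sides of representatives of $S$ and $S'$. Only finitely many inequivalent such cuts exist, because in a finite "strip" between two given minimal cuts there are only finitely many minimal cuts of the fixed cardinality $n$, by Menger's theorem applied to the barycentric subdivision. Hence $\cP$ is discrete. Then I form the tree $T$ as in Section 2 by adjoining maximal stars, and pass to $\mathcal{C}$ by the two geometric modifications already sketched after Theorem \ref{final}: each cyclic vertex $S = S_1 \cup \cdots \cup S_n$ is replaced by an $n$-cycle whose vertices correspond to the beads $M_i$; each 2-edge path going from an adjacent isolated-cut vertex through a star vertex to $S$ is collapsed onto the corresponding vertex of the cycle; and every remaining, non-cycle edge is doubled.

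Now define $g$ and $f$. For an equivalence class $[K]$ of an isolated cut, $g([K])$ is the pair of parallel edges arising from doubling the corresponding tree edge. For an equivalence class $[K]$ of a cut contained in some maximal cyclic set $S$, Lemma \ref{unique} shows that $S$ is unique, and Lemma \ref{hyp-in} together with Lemma \ref{basic} shows that $K$ is equivalent to a cut separating the beads of $S$ into two arcs; set $g([K])$ to be the corresponding pair of edges of the associated cycle in $\mathcal{C}$. Since the minimal edge cuts of a cactus are precisely pairs of parallel edges or pairs of edges lying on a common cycle, $g$ is a bijection. For $f$, an end $e$ of $\Gamma$ makes a consistent "side" choice at every vertex of $\cP$ (a bead at a cyclic vertex, a component at an isolated-cut vertex); these choices trace out either a unique end of $\mathcal{C}$ or terminate at an end-vertex, and this determines $f(e)$.

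Separation compatibility then splits into two cases. If $[K]$ is an isolated-cut class, then $g([K])$ separates $f(e_1), f(e_2)$ in $\mathcal{C}$ iff the pretree trajectories defining $f(e_1), f(e_2)$ end up on opposite sides of the corresponding tree edge, iff $e_1, e_2$ lie in distinct components of $\Gamma - K$. If $[K]$ is a cyclic class in the maximal cyclic set $S$, then $g([K])$ is a pair of cycle edges of $\mathcal{C}$ separating two bead-vertices iff $K$ separates the corresponding beads $M_i, M_j$ of $S$, iff $K$ separates $e_1, e_2$. Surjectivity of $f$ follows by a nested-intersection argument: each end or end-vertex of $\mathcal{C}$ corresponds to a nested sequence of components of $\Gamma$ minus minimal cuts, whose intersection always contains an end of $\Gamma$. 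Equivariance under $Aut(\Gamma)$ is automatic since every step of the construction (minimal cuts, equivalence, maximal cyclic sets, pretree, tree, cycle replacement, edge doubling) is canonical.

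The step I expect to be most delicate is matching cut classes inside a maximal cyclic set $S$ with pairs of cycle-edges in $\mathcal{C}$: one must show that two cuts inside $S$ separating different pairs of beads are inequivalent, and conversely that every equivalence class inside $S$ has a representative of the "cut between two beads" form---i.e.\ of the form $(M_i \cap S_i) \cup (M_j \cap S_{j-1})$. Both statements follow from Lemmas \ref{basic} and \ref{hyp-in}, but must be tracked carefully to conclude that the correspondence with the edges of the cycle built from the beads of $S$ is exact.
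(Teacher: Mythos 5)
Your proposal is correct and follows essentially the same route as the paper: build the discrete pretree $\cP$ of Theorem \ref{final}, complete it to a tree $T$ by adjoining maximal stars, then turn $T$ into the cactus $\mathcal{C}$ by replacing each cyclic vertex with a cycle on its beads, collapsing the intervening two-edge paths through star vertices, and doubling the remaining separating edges, with $f$ and $g$ defined exactly as the paper's construction dictates. The paper is terse here (it states the theorem after describing the construction, with no separate proof environment), and your write-up supplies a few details the paper leaves implicit—the discreteness argument via Menger, the explicit characterization of minimal edge cuts of a cactus, and the nested-intersection argument for surjectivity of $f$—but these are elaborations of, not departures from, the paper's approach.
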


\begin{Cor} (Stallings end theorem) Let $G$ be a group acting transitively on a graph $\Gamma $ with more than 2 ends. Then
$G$ splits as $G=A*_FB$ or $G=A*_F$ where $F$ has a finite index subgroup which is a stabilizer of an edge of $\Gamma $.
\end{Cor}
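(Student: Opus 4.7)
The plan is to extract from the cactus $\mathcal{C}$ of Theorem \ref{cactus} a $G$-tree and then apply Bass--Serre theory. By the last sentence of Theorem \ref{cactus}, $G$ acts on $\mathcal{C}$, hence on the pretree $\cP$ of maximal cyclic sets and isolated cut classes, and on the tree $T$ built from $\cP$ by adjoining one vertex per maximal star (as in Section 2). Since any two distinct ends of $\Gamma$ are separated by some minimal end cut, the hypothesis ``more than two ends'' ensures $\cP$ has at least two elements and $T$ has at least one edge; after subdividing to kill any inversions, $G$ acts on $T$ without inversions.

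The first essential step, and the main obstacle, is to verify that $G$ has no global fixed vertex on $T$. A $G$-fixed vertex would be either a $G$-stable equivalence class of isolated minimal cuts, a $G$-stable maximal cyclic set, or a $G$-stable maximal star, and in each case one obtains a $G$-invariant partition of the end set of $\Gamma$ into finitely many parts (the two sides of a cut, the beads of the cyclic set, or the elements of the star). The hard step is to derive a contradiction from such a $G$-invariant partition: combining transitivity of $G$ on vertices of $\Gamma$ with the existence of a second inequivalent minimal cut $L$ (forced by having more than two ends), one shows that the induced $G$-action on $\cP$ cannot fix any single element, by tracing $g$-translates of $L$ which force a hyperbolic isometry on $T$.

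Next, I would identify the edge stabilizer $F$ of an edge of $T$. Each such edge corresponds, via the bijection $g$ of Theorem \ref{cactus}, to an equivalence class $[K]$ of minimal cuts together with a choice of side (the adjacent star vertex); this selects a canonical finite cut $K\subset E(\Gamma)$, and $F$ is contained with finite index in the $G$-stabilizer of $K$. Since $K$ is finite, the kernel of the $F$-action on the edges of $K$ is a finite-index subgroup of $F$; this kernel is contained in, and has finite index over, the stabilizer in $G$ of any single edge of $K$, providing the finite-index subgroup of $F$ required by the statement.

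Finally, since $G$ acts on the tree $T$ without inversions and without a fixed vertex, Bass--Serre theory yields a non-trivial graph-of-groups decomposition of $G$. Depending on whether the quotient $G\backslash T$ has one or two vertex orbits on the endpoints of an edge orbit, this reads off as $G=A*_F$ or $G=A*_F B$, with $F$ the edge stabilizer as analysed above.
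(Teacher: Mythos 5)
Your plan follows the same route as the paper: pass from the pretree $\cP$ to the tree $T$, observe that $G$ acts on $T$, and invoke Bass--Serre theory to read off a one-edge splitting over an edge stabilizer. The paper's proof is only four sentences: it builds $T$ from $\cP$, \emph{asserts} that the action is non-trivial because $G$ acts transitively on $\Gamma$, concludes a splitting over an edge stabilizer, and notes that edge stabilizers stabilize equivalence classes of minimal cuts, which ``contain finitely many edges.'' Two remarks on where your plan diverges from or improves on this, and where it falls short.

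First, you correctly identify that the non-triviality of the $G$-action on $T$ (no globally fixed vertex) is the real content here, and the paper glosses over it. But your proposed resolution --- ``tracing $g$-translates of $L$ which force a hyperbolic isometry on $T$'' --- is not yet an argument. You need to say concretely why a $G$-invariant finite partition of ends (from a fixed isolated cut, cyclic set, or star vertex) contradicts vertex-transitivity together with the presence of a third end; the key point is that a minimal cut is a finite edge set, so a suitable translate $gK$ can be pushed entirely to one side of $K$, and one must then show this forces $gK\not\sim K$ (or produces a cut not equivalent to any cut in the fixed cyclic set/star), which is where the third end is actually used. Until this is filled in, the step you flagged as hard remains open. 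Also be careful in the star case: a maximal star in an infinite pretree need not be finite, so ``a $G$-invariant partition into finitely many parts'' is not automatic there.

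Second, your identification of the edge stabilizer has a wording problem that masks a real issue. There is no ``canonical finite cut $K\subset E(\Gamma)$'' attached to an edge of $T$: an edge of $T$ only determines the equivalence class $[K]$, which may contain several cuts. What the paper uses is that the \emph{union} of all edges of $\Gamma$ appearing in some cut of $[K]$ is a finite set $W$; the edge stabilizer $F$ permutes $W$, so the kernel $N$ of that permutation action has finite index in $F$ and fixes every edge $e\in W$, hence $N\subset\operatorname{Stab}_G(e)$. (Whether $\operatorname{Stab}_G(e)$ itself lies inside $F$, as the corollary's phrasing suggests, is a further point neither you nor the paper addresses explicitly.) Your phrase ``has finite index over the stabilizer of any single edge of $K$'' points in the wrong direction: the containment you get from the kernel argument is $N\subset\operatorname{Stab}_G(e)$, not the reverse.

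In short: same strategy as the paper, with the right instinct that the non-triviality step deserves an argument, but the plan does not yet supply one, and the edge-stabilizer bookkeeping is garbled.
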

\begin{proof} We associate a tree $T$ to $\cP$. The action is non trivial since the action of $G$ on $\Gamma $ is transitive. It follows that $G$ splits over a stabilizer of an edge.
Edges correspond to equivalence classes of minimal cuts. So edge stabilizers stabilize equivalence classes of minimal cuts. Since such
equivalence classes contain finitely many edges the result follows.
\end{proof}
Stallings' theorem covers the 2-ended case too. However our cactus is this case reduces to a single point. 
We remark that if $G$ is a finitely generated group and $\Gamma $ its Cayley graph
the 2-ended case is simpler and it is easy to show that in this case $G$ has a finite index subgroup isomorphic to $\Bbb Z$.
We note finally that Kr\"{o}n \cite{Kr} has given recently a very elegant proof of Stallings theorem using the methods of \cite{DK}.

\section{Generalizations}\label{gen}

One can show that the `cactus structure' of cuts exists in other settings as well.
We discuss here some such generalizations, which are interesting for finite graphs.

\begin{Def} 
Let $\Gamma $ be a graph and $K$ a set of edges of $\Gamma $.
% and
%let $K$ be a finite set of midpoints of edges of $\Gamma $. Then
%$K$ is called a \textit{cut set} for $\Gamma $ if $\Gamma -K$ has
%two connected components and $K$ is contained in the frontier of
%both components. 
We say that $K$ is an $n$-\textit{cut} if $\Gamma -K$ has more than one component and
$|K|=n$. 
%We say that $K$ is an $(n,k)$-cut if $|K|=n$ and both
%components of $\Gamma -K$ have at least $k$ vertices.
\end{Def}
%We remark that any $n$-cut is also an $(n,1)$-cut.
\begin{Def} Let $\Gamma $ be a graph and
let $S$ be a set of vertices of $\Gamma $. We call $S$
\textit{$n$-inseparable} if for any $r$-cut $K$, with $r\leq n$,
$S$ is contained in a single component of $\Gamma -K$.
\end{Def}

We remark that Dunwoody and Kr\"{o}n \cite{DK} consider a similar
notion of inseparable sets but their definition is slightly
stronger, they require further that $|S|>n$.

\begin{Def} Let $\Gamma $ be a graph and $k\in \Bbb N$. We define $N(k)$ to
be the smallest $n$ such that there are at least two distinct
maximal $n$-inseparable subsets of $\Gamma $ with at least $k$
vertices each. If there is no such $n$ we set $N(k)=\infty $. If for
some $k$, $N(k)<\infty $ we say that $\Gamma $ is a
\textit{$k$-thin} graph. We call an $N(k)$-cut $K$
\textit{essential} if both components of $\Gamma -K$ contain some
$N(k)$-inseparable set.
\end{Def}

We remark that if $k_1>k_2$ then $N(k_1)\geq N(k_2)$. In
particular if a graph is $k$-thin for some $k>2$ then it is also
2-thin. Clearly every graph with at least two vertices is 1-thin.
Assume that $\Gamma $ is $k$-thin, and set $n=N(k)$. If $S_1,S_2$ are $n$-inseparable subsets
and $K$ is an $n$-cut which separates $S_1,S_2$ then $\Gamma -K$ has exactly 2 components.

If $K_1,K_2$ are $n$-cuts of $\Gamma $ we say that that $K_1,K_2$ are equivalent if for
any two $n$-inseparable subsets of $\Gamma $, $S_1,S_2$ are separated by $K_1$ if and only if they
are separated by $K_2$. We denote the equivalence class of $K_1$ by $[K_1]$.

Lemma \ref{basic} applies in this context as well and one can show exactly as in the case
of minimal end cuts that all equivalence classes of $N(k)$-cuts of a $k$-thin graph are encoded by a cactus.

Clearly every graph with at least 2 vertices is $1$-thin. In this case $N(1)$ is the cardinality of a minimal edge cut
and every equivalence class has a single element. So this case amounts to the classical cactus theorem of
Dinits-Karzanov-Lomonosov (\cite{DKL}).
%\begin{enumerate}
%	\item 
%\begin{enumerate}
%	\item 
%\begin{enumerate}
%	\item 
%\end{enumerate}
%\end{enumerate}
%\end{enumerate}

%We consider now another notion of cut.
%
%\begin{Def} Let $\Gamma $ be a graph and
%let $K$ be a cut set for $\Gamma $. We say that $K$ is an
%\textit{$(n,n)$-cut} if $|K|=n$ and each component of
% $\Gamma -K$ has at least $n$ vertices.
%\end{Def}

\begin{Def}

Let $\Gamma $ be a graph and $K$ a set of edges of $\Gamma $. 
We say that $K$ is an $(n,k)$-\textit{cut} if $|K|=n$ and 
$\Gamma -K$ has at least 2 components which have each at least $k$ vertices.
Let $S$ be a set of vertices of $\Gamma $ containing at least $k$
elements. We call $S$ \textit{$(n,k)$-inseparable} if for any
$(r,k)$-cut $K$, with $r\leq n$, $S$ is contained in a single
component of $\Gamma -K$.
\end{Def}

We note that $(n,k)$-cuts have been studied extensively in network theory
(see e.g. \cite{FaF}, \cite{ZY})

\begin{Def} Let $\Gamma $ be a graph. We say that $\Gamma $
is \textit{$(n,k)$-large} if $\Gamma $ has at least 2 distinct
maximal $(n,k)$-inseparable subsets. We set $M(k)$ to be the least
$n$ such that $\Gamma $ has at least two distinct maximal
$(n,k)$-inseparable subsets. If $M(k)<\infty $ we say that $\Gamma
$ is $k$-\textit{slim}.
\end{Def}

\begin{Def} Let $\Gamma $ be a $k$-slim graph. Let $K_1$, $K_2$ be
$(M(k),k)$-cuts of $\Gamma $. We say that $K_1,K_2$ are equivalent if
for any two $(n,k)$-inseparable subsets, $S_1,S_2$ of $\Gamma $ we
have that $S_1,S_2$ are contained in distinct components of
$\Gamma -K_1$ if and only if they are contained in distinct
components of $\Gamma -K_2$. We write then $K_1\sim K_2$ and we
denote the equivalence class of $K_1$ by $[K_1]$.
\end{Def}

\begin{Def} We call an $(M(k),k)$ cut
\textit{essential} if both components of $\Gamma -K$ contain some
$(M(k),k)$-inseparable set. 
\end{Def}

Lemma \ref{basic} applies to equivalence classes of essential  $(M(k),k)$-cuts,
so such cuts are also encoded by a cactus.

\end{document}
\bye